\newcommand{\CM}{Cohen-Macaulay}
\newcommand{\n}{\mathfrak{n} }
\newcommand{\m}{\mathfrak{m} }
\newcommand{\M}{\mathfrak{M} }
\newcommand{\q}{\mathfrak{q} }
\newcommand{\p}{\mathfrak{p} }
\newcommand{\A}{\mathfrak{a} }
\newcommand{\F}{\mathcal{F} }
\newcommand{\G}{\mathcal{G} }
\newcommand{\Hc}{\mathcal{H} }
\newcommand{\Z}{\mathbb{Z} }
\newcommand{\Oo}{\mathcal{O} }
\newcommand{\rt}{\rightarrow}
\newcommand{\ov}{\overline}
\newcommand{\wh}{\widehat }
\newcommand{\image}{\operatorname{image}}
\newcommand{\depth}{\operatorname{depth}}
\newcommand{\charp}{\operatorname{char}}
\newcommand{\Tr}{\operatorname{Tr}}
\newcommand{\rank}{\operatorname{rank}}
\newcommand{\Ext}{\operatorname{Ext}}
\theoremstyle{plain}
\newtheorem{theorem}{Theorem}[section]
\newtheorem{corollary}[theorem]{Corollary}
\newtheorem{lemma}[theorem]{Lemma}
\newtheorem{proposition}[theorem]{Proposition}
\theoremstyle{definition}
\newtheorem{remark}[theorem]{Remark}
\theoremstyle{remark}
\begin{document}

\title[invariant rings]{The Cohen-Macaulay property of invariant rings over ring of integers of a global field-II}
\author{Tony~J.~Puthenpurakal}
\date{\today}
\address{Department of Mathematics, IIT Bombay, Powai, Mumbai 400 076}

\email{tputhen@math.iitb.ac.in}

\subjclass{Primary  13A50; Secondary 13H10 }
\keywords{invariant rings, ring of integers of global fields, Hilbert class fields, Cohen-Macaulay rings, Gorenstein rings, group cohomology}

 \begin{abstract}
Let $A$ be the ring of integers of a number field $K$.  Let $G \subseteq GL_3(A)$ be a finite group. Let $G$ act linearly on $R = A[X,Y, Z]$ (fixing $A$) and let $S = R^G$ be the ring of invariants. Assume the Veronese subring $S^{<m>}$ of $S$ is standard graded. We prove that if for all primes $p$ dividing $|G|$, the Sylow $p$-subgroup of $G$ has exponent $p$ then for all $l \gg 0$ the Veronese subring $S^{<ml>}$  of $S$ is \CM. We prove a similar result if for all primes $p$ dividing $|G|$, the prime $p$ is unramified in $K$.
\end{abstract}
 \maketitle
\section{introduction}
Dear Reader, while reading this paper it is a good idea to have part 1 of this paper, see \cite{P}.
\s \label{intro} In this paper we assume $A$ is ring of integers in a global field, i.e., $A$ is one of the following two rings
\begin{enumerate}
  \item  the ring of integers of a number field $K$ (i.e., $K$ is a finite extension of $\mathbb{Q}$).
  \item the  ring of integers of finite extension of $F_q(t)$ (where $F_q$ is a finite field with $q$ elements).
\end{enumerate}
Let $G \subseteq GL_n(A)$ be a finite group. Let $R = A[X_1, \ldots, X_n]$ and let $G$ act linearly on $R$ (fixing $A$). In this paper we study \CM \ property
 of $S = R^G$ when $n = 3$. Previously the case when $n = 3$ was studied when $A = \Z$, see \cite{A}. In this paper an example of a group $G$ was given with   $\Z[X_1, X_2, X_3]^G$  \emph{not}   \CM. Note that $S$ need not be standard graded.  However a Veronese subring $S^{<m>}$ is standard graded (for filtration's see \cite[2.11]{Sch}; the general case is similar).
In this paper we show
\begin{theorem}\label{main}
  Let $A$ be the ring of integers of a number field $K$.  Let $G \subseteq GL_3(A)$ be a finite group. Let $G$ act linearly on $R = A[X,Y, Z]$ (fixing $A$) and let $S = R^G$ be the ring of invariants. Assume $S^{<m>}$ is standard graded. If for all primes $p$ dividing $|G|$, the Sylow $p$-subgroup of $G$ has exponent $p$ then for all $l \gg 0$ the Veronese subring $S^{<ml>}$  of $S$ is \CM. In particular $S^{<m>}$ is generalized \CM.
\end{theorem}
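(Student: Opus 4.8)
The plan is to reduce the statement, one modular prime at a time, to a finiteness assertion in the group cohomology of the Sylow subgroups, and to invoke the hypothesis only at that last step. Recall that $\dim S=\dim R=4$, that $S$ is a normal domain, and that $S$ is flat over $A$. Over $A[1/|G|]$ the Reynolds operator splits $S\otimes A[1/|G|]\hookrightarrow R\otimes A[1/|G|]$, so $S\otimes A[1/|G|]$ is a direct summand of a polynomial ring, hence \CM, as is every Veronese subring; thus the failure of the \CM\ (or \GCM) property is supported over the finitely many primes $p$ dividing $|G|$. Fix such a $p$ and put $T:=S/pS$. Since $T\hookrightarrow(R/pR)^{G}=\mathbb F_p[X,Y,Z]^{G}$, $T$ is a three--dimensional graded domain over $\mathbb F_p$, and the standard--gradedness of $S^{<m>}$ descends to $T^{<m>}$. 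Because $p$ is $S$--regular, for a maximal ideal $\M$ of $S$ over $p$ the ring $S_{\M}$ is \CM\ if and only if $T_{\M}$ is \CM, and likewise for Veroneses; so it is enough to prove, for each $p\mid|G|$, that $T$ is \GCM\ and that $T^{<ml>}$ is \CM\ for $l\gg0$.

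Write $\ov R=R/pR=\mathbb F_p[X,Y,Z]$. Taking $G$--cohomology of $0\to R\xrightarrow{p}R\to\ov R\to0$ gives an exact sequence of graded modules
\[
0\longrightarrow T\longrightarrow \ov R^{\,G}\longrightarrow C\longrightarrow 0,\qquad C=H^{1}(G,R)[p]=\bigoplus_{d\ge1}H^{1}\!\bigl(G,\operatorname{Sym}^{d}A^{3}\bigr)[p],
\]
with $C_0=H^1(G,A)[p]=0$ since $G$ is finite and $A$ torsion--free. Here $\ov R^{\,G}=\mathbb F_p[X,Y,Z]^{G}$ is \CM\ of dimension $3$ (in at most three variables a modular invariant ring is Cohen--Macaulay; concretely, a Sylow $p$--subgroup $P\sub GL_3(\mathbb F_p)$ fixes a nonzero vector, so $\dim(\mathbb F_p^{3})^{P}\ge 1$, whence $\depth\mathbb F_p[X,Y,Z]^{P}=3$ by the depth bound for modular invariant rings, and $\ov R^{\,G}$ is a direct summand of $\mathbb F_p[X,Y,Z]^{P}$ over itself because $[G:P]$ is prime to $p$). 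Substituting into the local cohomology sequence and using $H^{i}_{T_+}(\ov R^{\,G})=0$ for $i<3$ yields $H^{0}_{T_+}(T)=0$, $H^{1}_{T_+}(T)\cong\Gamma_{T_+}(C)$ and $H^{2}_{T_+}(T)\cong H^{1}_{T_+}(C)$. Since $\Gamma_{T_+}(C)$ is the finite--length submodule of the finitely generated module $C$, which moreover lives in positive degrees, $\depth_{T_+}T\ge2$ automatically and $H^{1}_{T_+}(T)$ occupies only finitely many positive degrees; the entire problem therefore collapses to two statements about $C$: \emph{(a)} $H^{1}_{T_+}(C)$ has finite length, and \emph{(b)} $H^{1}_{T_+}(C)_{0}=0$. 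Indeed (a), for all $p\mid|G|$, forces $H^{i}_{\M}(S)$ to have finite length for $i<4$, i.e. $S^{<m>}$ is \GCM; and (a) together with (b) makes $H^{i}_{T_+}(T)$ for $i<3$ vanish in every degree divisible by $ml$ once $l\gg0$, i.e. $T^{<ml>}$, and hence $S^{<ml>}$, is \CM. (I would expect a reduction of roughly this kind to be available from Part~I, \cite{P}.)

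The crux is establishing (a) and (b), and this is where the hypothesis is used. By the transfer argument $C$ is a direct summand, as a module over $\mathbb F_p[X,Y,Z]^{P}$, of the $p$--primary part of $H^{1}(P,\ov R)=\bigoplus_{d}H^{1}(P,\operatorname{Sym}^{d}\mathbb F_p^{3})$. By the Evens--Venkov finite--generation theorem $H^{*}(P,\ov R)$ is a finitely generated module over $H^{*}(P,\mathbb F_p)\otimes\mathbb F_p[X,Y,Z]^{P}$, so $C$ is a finitely generated $\mathbb F_p[X,Y,Z]^{P}$--module, and by the Quillen--type description of its support, $\operatorname{Supp}(C)$ is contained in the image in $\Spec\mathbb F_p[X,Y,Z]^{P}$ of $\bigcup_{1\ne Q\le P}(\mathbb F_p^{3})^{Q}$. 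The exponent--$p$ assumption is decisive here: every nontrivial subgroup of $P$ contains a copy of $\Z/p$, the cohomology of $P$ is detected on its elementary abelian subgroups, and $(\mathbb F_p^{3})^{\Z/p}$ has dimension $1$ or $2$ for any $\Z/p\sub GL_3(\mathbb F_p)$; combining this with the fact that in three variables $\operatorname{Sym}^{\bullet}\mathbb F_p^{3}$ is, modulo a polynomial subring on which $P$ acts trivially, cohomologically nontrivial only in a bounded range of degrees, a direct analysis shows that $C$ has enough depth and the right support to force (a) and (b).

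The main obstacle is precisely this last analysis --- the passage from ``exponent $p$'' to the finite length of $H^{1}_{T_+}(C)$ and the vanishing of its degree--$0$ part --- and the hypothesis cannot be dropped there: for a general modular $G$ the module $C$ can have depth $0$, or a one--dimensional component which makes $H^{1}_{T_+}(C)_{0}\cong H^{1}(\operatorname{Proj}T,\Oo_{\operatorname{Proj}T})$ nonzero and so keeps \emph{every} Veronese subring of $S$ from being \CM; this is the mechanism underlying the non--\CM\ example over $\Z$ in \cite{A}, and it is exactly the exponent--$p$ condition that tames the support and depth of $C$ in the bottom dimensions.
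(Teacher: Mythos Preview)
Your proposal has a genuine gap at the crucial step, and the overall approach differs from the paper's. There are fixable preliminary issues: $R/pR=(A/pA)[X,Y,Z]$ is not $\mathbb F_p[X,Y,Z]$ for general $A$ (so $T=S/pS$ need not be a domain and $(R/pR)^G$ is not a single modular invariant ring --- one must first localize at a prime $P$ of $A$ above $p$, as the paper does throughout); and transfer gives $C=H^1(G,R)[p]$ as a summand of $H^1(P,R)[p]$, not of $H^1(P,\ov R)$, so the jump from integral to mod-$p$ coefficients is unjustified. The real problem, however, is that your invocation of the exponent-$p$ hypothesis is vacuous. You write that it is ``decisive'' because every nontrivial subgroup of $P$ contains $\Z/p$, cohomology of $P$ is detected on elementary abelian subgroups, and $(\mathbb F_p^{3})^{\Z/p}$ has dimension $1$ or $2$. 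But \emph{each of these holds for an arbitrary $p$-group}: the first is Cauchy, the second is Quillen, the third is linear algebra for any order-$p$ element of $GL_3(\mathbb F_p)$. Nothing in your support/depth sketch actually uses exponent $p$, and the closing ``a direct analysis shows that $C$ has enough depth and the right support to force (a) and (b)'' is not a proof --- it is precisely the step that needs to be supplied, and no mechanism has been identified that would make the hypothesis do work.

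The paper's route is quite different and shows where the hypothesis is genuinely used. After reducing to a Sylow $p$-subgroup via \ref{cm-sylow}, the paper uses exponent $p$ \emph{arithmetically}: every eigenvalue of every element of $G$ is a $p$-th root of unity, so the representation can be conjugated into $GL_3(B)$ with $B$ the ring of integers of the Hilbert class field of $\mathbb Q(\zeta_p)$, and by Theorem~\ref{AB} one may replace $A$ by $B$. The point of $B$ is that its localization at any prime over $p$ is a DVR $(\Oo,(\pi))$ satisfying $\zeta_p-1=\pi\cdot(\text{unit})$, exactly hypothesis \ref{setup-case2-O}. The local Theorem~\ref{main-body} then does the work: it is proved by induction on $|G|$ (reducing to $G=\Z/p$) inside a class $\F_{\Oo}$ of graded rings designed to be stable under taking invariants, using the Ellingsrud--Skjelbred spectral sequences and explicit computations of $H^i(\Z/p,-)$ that rely on the diagonalizability available over $\Oo$ --- not Evens--Venkov or Quillen stratification.
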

Let $p$ be a prime dividing $|G|$. Then the proof of Theorem \ref{main} reduces to the case when $A$ is the ring of integers of the Hilbert class field of $\mathbb{Q}(e^{2\pi i/p})$ and $G$ is a $p$-group.
We prove the following result
\begin{theorem}\label{g2}
Let $A$ be the ring of integers in the Hilbert class field of $\mathbb{Q}(e^{2\pi i/p})$ and let $P$ be a prime ideal of $A$ containing $p$. Set $(\Oo, (\pi)) = (A_P, PA_P)$. Let $G \subseteq GL_3(\Oo)$ be a $p$-group. Let $G$ act linearly on $R = \Oo[X,Y,Z]$. Assume that the natural map $G \rt GL_3(\Oo/(\pi))$ is trivial. Then $R^G$ is \CM.
\end{theorem}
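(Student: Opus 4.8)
The plan is to force $G$ to be a very restricted group and then recognize $R^G$ as a semigroup ring. First I would reduce to the case where $\Oo$ is complete: since $\Oo\rt\wh{\Oo}$ is faithfully flat and forming $G$‑invariants commutes with flat base change, $(R\otimes_{\Oo}\wh{\Oo})^G=R^G\otimes_{\Oo}\wh{\Oo}$, and Cohen–Macaulayness descends along the faithfully flat map $R^G\rt R^G\otimes_{\Oo}\wh{\Oo}$; so I may assume $\Oo$ complete. I would also fix the uniformizer $\pi=\zeta_p-1$, which is legitimate because $p$ is totally ramified of degree $p-1$ in $\mathbb{Q}(\zeta_p)$ and unramified in its Hilbert class field, so $\zeta_p-1$ generates $PA_P$. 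In particular $v(p)=p-1$, and since $p/(\zeta_p-1)^{p-1}=\prod_{j=1}^{p-1}(1+\zeta_p+\cdots+\zeta_p^{j-1})\equiv (p-1)!\pmod{\pi}$, Wilson's theorem gives $p/\pi^{p-1}\equiv -1\pmod{\pi}$.

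Next I would pin down $G$. The hypothesis means $G\sub 1+\pi M_3(\Oo)$, and a short valuation estimate on $(1+\pi^{j}N)^p-1$, using $v(p)=p-1$, shows that $1+\pi^2M_3(\Oo)$ is torsion‑free; as $G$ is finite it meets that subgroup trivially, so writing $g=1+\pi N_g$ the assignment $g\mapsto\overline{N_g}$ embeds $G$ into $(M_3(k),+)$ with $k=\Oo/\pi$. Hence $G$ is elementary abelian and the matrices $N_g$ pairwise commute. Expanding $g^p=1$ and using $v(p)=p-1$ together with $p/\pi^{p-1}\equiv -1$ yields $\overline{N_g}^{\,p}-\overline{N_g}=0$ in $M_3(k)$, so each $\overline{N_g}$ is semisimple with eigenvalues in $\mathbb{F}_p$.

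The heart of the argument, and the step I expect to be hardest, is to diagonalize the action over $\Oo$. The $\overline{N_g}$ form a commuting family of semisimple matrices with eigenvalues in $\mathbb{F}_p\sub k$, hence are simultaneously diagonalizable over $k$; thus the commutative $\Oo$‑subalgebra $B\sub\operatorname{End}_{\Oo}(L)$ generated by the $N_g$, $L$ the degree‑one part of $R$, satisfies $B\otimes_{\Oo}k\cong k^{s}$ with $s\le 3$. Lifting the idempotents of $B\otimes_{\Oo}k$ to $B$ (here completeness of $\Oo$ enters) produces a $G$‑stable decomposition $L=\bigoplus_{\lambda}L_\lambda$ on which $\overline{N_g}$ acts by the scalar $\lambda_g\in\mathbb{F}_p$. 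On $L_\lambda$ one then has $g\equiv 1+\lambda_g\pi\equiv\zeta_p^{\lambda_g}\pmod{\pi^2}$; since $g$ has order dividing $p$, every eigenvalue $\mu\in\mu_p$ of $g|_{L_\lambda}$ satisfies $v(\mu-\zeta_p^{\lambda_g})\ge 2$, which is impossible unless $\mu=\zeta_p^{\lambda_g}$, so $g|_{L_\lambda}=\zeta_p^{\lambda_g}\cdot\mathrm{id}$. Consequently there is an $\Oo$‑basis $X,Y,Z$ of $L$ and characters $\chi_1,\chi_2,\chi_3\in\Hom(G,\mu_p)$ with $g\cdot X=\chi_1(g)X$, $g\cdot Y=\chi_2(g)Y$, $g\cdot Z=\chi_3(g)Z$. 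It is exactly here that the precise ramification $v(p)=p-1$ and the presence of $\zeta_p$ are indispensable: weaker ramification makes $G$ trivial, and without $\zeta_p$ the eigenvalues need not lie in $\mathbb{F}_p$, which is what collapses each $g|_{L_\lambda}$ to a scalar.

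Finally, with $R=\Oo[X,Y,Z]$ and $G$ acting monomially, $R^G=\Oo[\Sigma]$ where $\Sigma=\Z^3_{\ge 0}\cap L$ and $L=\ker\bigl(\Z^3\rt\Hom(G,\mu_p),\ (a,b,c)\mapsto\chi_1^{a}\chi_2^{b}\chi_3^{c}\bigr)$ is a subgroup of finite index. Then $\Sigma$ is a finitely generated, saturated (normal) affine semigroup, so $\Z[\Sigma]$ is \CM \ (Hochster's theorem applied to each fibre $\mathbb{F}_\ell[\Sigma]$ and $\mathbb{Q}[\Sigma]$, together with flatness over $\Z$), whence $\Oo[\Sigma]=\Oo\otimes_{\Z}\Z[\Sigma]$ is flat over $\Oo$ with Cohen–Macaulay fibres $k[\Sigma]$ and $\operatorname{Frac}(\Oo)[\Sigma]$, hence \CM \ by the local criterion. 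Undoing the base change of the first step shows $R^G$ is \CM.
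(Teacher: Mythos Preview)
Your approach is correct and takes a genuinely different route from the paper's. The paper proceeds by induction on $|G|$ inside an auxiliary class $\Hc_{\Oo}$ of four-dimensional graded \CM\ domains: it reduces to $G$ cyclic of order $p$ and then runs a group-cohomology depth chase, identifying $H^2(G,R)$ with $R^G/pR^G$ (via single-generator diagonalization, essentially the same valuation input you use) and reading off $\depth R^G/\pi R^G \geq 3$ from the short exact sequences linking $R/\pi R$, $H^1(G,R)$, and $H^2(G,R)$. Your route is structural rather than cohomological: you show the hypothesis forces $G$ to be elementary abelian and simultaneously diagonalizable over $\Oo$, whence $R^G$ is a normal affine semigroup ring and Hochster's theorem applies. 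Your argument is more direct, works verbatim in any number of variables (not just three), and shows as a bonus that $R^G$ is toric; the paper's cohomological framework, by contrast, is built to handle the harder theorems of the paper where the reduction mod $\pi$ is nontrivial and no such diagonalization is available.

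One technical point deserves tightening: simultaneous diagonalizability of the $\overline{N_g}$ over $k$ only controls the \emph{image} of $B$ in $M_3(k)$, not $B \otimes_{\Oo} k$ itself, so the idempotent-lifting step as written is incomplete. A cleaner fix: dividing $\prod_{i}(g - \zeta_p^i) = 0$ by $\pi^p$ shows each $N_g$ satisfies $X\prod_{i=1}^{p-1}(X - v_i) = 0$ over $\Oo$ with $v_i = (\zeta_p^i-1)/\pi$, and these roots are pairwise distinct modulo $\pi$. Hence $B$ is a torsion-free quotient of a finite product of copies of $\Oo$ and is therefore itself isomorphic to $\Oo^s$; each $N_g$ then already acts as a genuine $\Oo$-scalar on every summand $e_j L$, making your subsequent eigenvalue argument (which is also valid) unnecessary.
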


The next result gives the \CM \ property of $R^G$ under a condition on $K$.
\begin{theorem}\label{unramify}
 Let $A$ be the ring of integers of a number field $K$.  Let $G \subseteq GL_3(A)$ be a finite group. Let $G$ act linearly on $R = A[X,Y, Z]$ (fixing $A$) and let $S = R^G$ be the ring of invariants.  Assume $S^{<m>}$ is standard graded. Assume for every prime $p$ dividing $|G|$, the prime $p$ is unramified in $K$. Then .for all $l \gg 0$ the Veronese subring $S^{<ml>}$  of $S$ is \CM. In particular $S^{<m>}$ is generalized \CM.
\end{theorem}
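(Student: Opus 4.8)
The plan is to reduce, via the machinery of \cite{P}, to a local question at each prime $p\mid|G|$, and then to exploit the unramifiedness of $p$ twice: it makes $p$ a uniformizer, and it forces $G$ to be very small. Away from the finitely many primes of $A$ lying over a rational prime dividing $|G|$ the Reynolds operator is available and $R^G$ is a direct summand of the regular ring $R$, so the statements on $S^{<ml>}$ and on $S^{<m>}$ being \GCM\ reduce to controlling the local cohomology of $R^G$ at such a prime. Fix a rational prime $p\mid|G|$, a prime $P$ of $A$ over $p$, and set $(\Oo,(\pi))=(A_P,PA_P)$; since $p$ is unramified in $K$ we have $PA_P=pA_P$, so we take $\pi=p$. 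Passing to a Sylow $p$-subgroup $H\sub G$ is harmless: $[G:H]$ is a unit of $\Oo$, so the relative transfer exhibits $R^G_{\Oo}$ as an $R^G_{\Oo}$-module direct summand of $R^H_{\Oo}$, whence Cohen-Macaulayness (and its generalized version) of $R^H_{\Oo}$ descends. It therefore suffices to show that $R^H=\Oo[X,Y,Z]^H$ is \CM\ for $H\sub GL_3(\Oo)$ a $p$-group and $\Oo$ unramified over $\Z_{(p)}$.

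The decisive point is that the hypotheses force $p\le 3$. An element $h\in H$ of order $p$ is semisimple ($\charp K=0$) and $\ne 1$, so one of its eigenvalues is a primitive $p$-th root of unity $\zeta$; the minimal polynomial of $\zeta$ over $K$ then divides the degree-$3$ characteristic polynomial of $h$, so $[K(\zeta):K]\le 3$. As $\mathbb{Q}(\zeta)/\mathbb{Q}$ is abelian this gives $[\mathbb{Q}(\zeta):\mathbb{Q}(\zeta)\cap K]\le 3$, hence $[\mathbb{Q}(\zeta)\cap K:\mathbb{Q}]\ge (p-1)/3$; but $\mathbb{Q}(\zeta)\cap K$ is a subfield of $\mathbb{Q}(\zeta)$, hence totally ramified at $p$, while $p$ is unramified in $K$, so $\mathbb{Q}(\zeta)\cap K=\mathbb{Q}$ and $p-1\le 3$. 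Thus $p\in\{2,3\}$. For $p=3$ the same reasoning shows that $H$ is cyclic of order $1$ or $3$ (the faithful $3$-dimensional representations of $C_3\times C_3$, of $C_3^{3}$ and of the Heisenberg group of order $27$ all have character field $\mathbb{Q}(\zeta_3)$, which ramifies at $3$); such an $H$ acts on $\Oo^3$ through a representation defined over $\Z$ --- a $2$-dimensional rotation plus a trivial summand --- so $R^H=\Oo\otimes_{\Z}\Z[X,Y,Z]^H$ with $\Z[X,Y,Z]^H=\Z[X,Y]^{C_3}[Z]$ a hypersurface, hence \CM, and Cohen-Macaulayness passes to $R^H$ by flat base change. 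This leaves $p=2$, where $H$ belongs to a short list of $2$-groups: essentially $C_2,\ C_4,\ C_2^{2},\ C_2^{3},\ C_4\times C_2,\ D_4,\ Q_8,\ D_4\times C_2,\ Q_8\times C_2$ and their subgroups, the groups of exponent $8$ being excluded because they would require $\zeta_8$.

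For $p=2$ I would separate the groups whose $3$-dimensional faithful representation is defined over $\Z$ (all but the $Q_8$-flavoured ones) from the rest. In the first case $R^H=\Oo\otimes_{\Z}\Z[X,Y,Z]^H$, and the rings $\Z[X,Y,Z]^H$ for these small groups are hypersurface or toric rings, or are handled by the $A=\Z$ analysis of \cite{A} together with the \GCM\ criterion of \cite{P}; either way $R^H$ is \GCM, often even \CM. In the $Q_8$-flavoured case one must work over a field containing $\mathbb{Q}(\sqrt{-3})$; writing $H_1=\ker\big(H\rt GL_3(\Oo/(\pi))\big)$, one checks --- using $\pi=p=2$, so that $(I+2M)^2=I+4(M+M^2)$ has a transparent $2$-adic expansion --- that $H_1$ is abelian and realized over $\Z$ (for instance $H_1=Z(Q_8)$, and $R^{H_1}$ is a quadric cone, in particular \CM; alternatively one invokes Theorem~\ref{g2} after a harmless faithfully flat base change). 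One then runs the local-cohomology/Lyndon--Hochschild--Serre spectral sequence for $1\rt H_1\rt H\rt \ov H\rt 1$ to reduce the Cohen-Macaulayness of $R^H=(R^{H_1})^{\ov H}$ to the vanishing, for $i\ge 1$, of $H^{i}(\ov H,-)$ with coefficients in the Matlis dual of the canonical module of $R^{H_1}$; since $\ov H\sub GL_3(\Oo/(\pi))$ is a $2$-group in characteristic $2$ it is unipotent, and one clears it by a d\'evissage along its (elementary abelian) chief series, again via the criterion of \cite{P}.

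The main obstacle is exactly this last d\'evissage. Once one divides by $H_1$, the ring $R^{H_1}$ is no longer a polynomial ring, so the clean criterion of \cite{P} --- which for a polynomial ring reads roughly ``$R^G$ is \CM\ if and only if the group cohomology $H^{i}(G,-)$, $i\ge 1$, of the top local cohomology of $R$ vanishes'' --- must be upgraded to a version valid for a \CM\ (ideally quasi-Gorenstein) graded $\Oo$-algebra of dimension $4$, and one must understand the canonical module of $R^{H_1}$ and the $\ov H$-action on it well enough to force the relevant group cohomology to vanish, or at least to be of finite length, which already gives the \GCM\ conclusion. The groups $H$ containing an element of order $4$ are the delicate ones: for them the reduction $H\rt GL_3(\Oo/(\pi))$ is genuinely nontrivial and cannot be trivialized by conjugation, and it is precisely there that unramifiedness is used to pin down $H_1$, the fixed ring $R^{H_1}$, and the action on its local cohomology.
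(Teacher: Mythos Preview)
Your observation that the unramifiedness hypothesis forces $p\in\{2,3\}$ is correct and attractive, but the paper uses unramifiedness in a completely different and much more uniform way, with no restriction on $p$ and no case analysis on the group. After the same reduction to a Sylow $p$-subgroup, the paper \emph{adjoins} a primitive $p$-th root of unity: it passes to $B$, the ring of integers of $K(\zeta_p)$, which by Theorem~\ref{AB} preserves the asymptotic \CM\ statement. Unramifiedness of $p$ in $K$ is then used exactly once, to verify that for any prime $P$ of $B$ above $p$ the DVR $\Oo=B_P$ satisfies setup~\ref{setup-case2-O} (namely $\zeta-1=\pi u_\zeta$ with $u_\zeta$ a unit), since all ramification at $p$ in $K(\zeta_p)$ comes from the cyclotomic layer. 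Once this holds, Theorem~\ref{main-body} applies verbatim: $\Oo[X,Y,Z]\in\F_\Oo$, hence $\Oo[X,Y,Z]^G\in\F_\Oo$, and Proposition~\ref{acm} finishes. No classification of the possible $p$-groups, and no distinction between $p=2$ and $p=3$, ever enters.

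By contrast, your route has genuine gaps beyond the one you yourself flag. For $p=3$ you assert that the $C_3$-action on $\Oo^3$ is $\Oo\otimes_{\Z}(\text{a $\Z$-lattice})$ and hence $R^H=\Oo\otimes_\Z\Z[X,Y,Z]^H$; this conflates rational and integral equivalence of representations. Knowing that the $K$-representation is $\mathbf{1}\oplus(\text{rotation})$ does not determine the $\Oo$-lattice up to isomorphism, and different lattices can yield non-isomorphic invariant rings, so this identification needs an argument you do not give. For $p=2$ you correctly identify the obstacle: after dividing by $H_1$ you are no longer over a polynomial ring, and the ``upgrade'' of the criterion from \cite{P} that you say is needed is precisely the content of Theorem~\ref{main-body} and the class $\F_\Oo$. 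In other words, the machinery you would have to build to finish your d\'evissage is the paper's main engine; invoking it directly, after adjoining $\zeta_p$, bypasses the entire case analysis.
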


In part one of this paper we had conjectured that $A[X, Y]^G$ is always \CM. In this paper we prove
\begin{theorem}\label{d2}
Let $A$ be the ring of integers of a number field $K$.  Let $G \subseteq GL_2(A)$ be a finite group. Let $G$ act linearly on $R = A[X,Y]$ (fixing $A$) and let $S = R^G$ be the ring of invariants. Then for all $l \gg 0$ the Veronese subring $S^{<l>}$  of $S$ is \CM. In particular $S$ is generalized \CM.
\end{theorem}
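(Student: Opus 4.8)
The plan is to prove directly that $S=R^{G}$ is generalized \CM\ by locating its non-\CM\ locus, and then to deduce the statement on Veronese subrings. Since $R=A[X,Y]$ is a polynomial ring over the Dedekind domain $A$ it is regular, hence a normal domain of dimension $3$; consequently $S$ is a normal domain of dimension $3$ and so satisfies Serre's condition $(S_{2})$. Thus $\depth S_{\q}\ge\min(2,\dim S_{\q})$ for every $\q\in\Spec S$, so $S_{\q}$ is \CM\ whenever $\height\q\le 2$. In particular the non-\CM\ locus of $S$ consists of maximal ideals (all of height $3$, as $S$ is an equidimensional finitely generated $A$-domain), and for \emph{every} maximal ideal $\M$ of $S$ the local ring $S_{\M}$ is \CM\ on its punctured spectrum.

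Next I would identify which maximal ideals can fail to be \CM. The linear $G$-action on $R$ is graded, so it descends to an action of $G$ on the \emph{regular} two-dimensional scheme $\mathbf{P}:=\operatorname{Proj}R=\mathbb{P}^{1}_{A}$, and I claim $\operatorname{Proj}S\cong\mathbf{P}/G$. Since $R$ is integral over $S$, the canonical morphism $\mathbf{P}\rt\operatorname{Proj}S$ is everywhere defined, finite and surjective; its fibres are the $G$-orbits (the graded primes of $R$ lying over a fixed graded prime of $S$ form a single $G$-orbit), and it is $G$-invariant, so it factors through a finite birational bijection $\mathbf{P}/G\rt\operatorname{Proj}S$. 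On a chart $\Spec S_{(f)}$ the ring $S_{(f)}=\bigl[(R_{f})^{G}\bigr]_{0}$ is the invariant ring of a normal domain, hence normal, so $\operatorname{Proj}S$ is normal; a finite birational morphism onto a normal scheme is an isomorphism. Since $\mathbf{P}$ is regular and $G$ is finite, $\mathbf{P}/G$ is a normal scheme of dimension $2$, hence \CM; therefore $\operatorname{Proj}S$ is \CM. By the usual relation between the \CM\ property of $\operatorname{Proj}S$ and that of $S$ away from the irrelevant ideal (after replacing $S$ by a standard graded Veronese subring if necessary, which does not change $\operatorname{Proj}$), $S_{\q}$ is \CM\ for every prime $\q$ with $S_{+}\not\subseteq\q$. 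In particular every non-homogeneous maximal ideal of $S$ is \CM, since such an ideal cannot contain $S_{+}$ (otherwise it would be the preimage of a maximal ideal of $S/S_{+}\cong A$, hence homogeneous). So the only candidates for non-\CM\ points are the irrelevant homogeneous maximal ideals $\M_{\m}:=\m S+S_{+}$, $\m\in\operatorname{Max}A$.

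Then the prime-by-prime conclusion. If $\m$ has residue characteristic not dividing $|G|$, then $|G|$ is a unit in $A_{\m}$ and the Reynolds operator $|G|^{-1}\sum_{g\in G}g$ splits $S\otimes_{A}A_{\m}\hookrightarrow A_{\m}[X,Y]$; since $A_{\m}[X,Y]$ is regular and $\M_{\m}$ extends to an ideal of $A_{\m}[X,Y]$ whose radical is a height-$3$ maximal ideal, $H^{i}_{\M_{\m}}(A_{\m}[X,Y])=0$ for $i<3$, whence $H^{i}_{\M_{\m}}(S\otimes A_{\m})=0$ for $i<3$ and $S$ is \CM\ at $\M_{\m}$. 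Therefore the non-\CM\ locus of $S$ is contained in the \emph{finite} set $\{\M_{\m}:\m\in\operatorname{Max}A,\ \m\mid|G|\}$, and by the first paragraph $S$ is \CM\ on the punctured spectrum of each such $\M_{\m}$; so $S$ is generalized \CM\ and $H^{i}_{\M_{\m}}(S)$ has finite length for $i<3$. Finally, $H^{i}_{\M_{\m}S^{<l>}}(S^{<l>})$ is the part of $H^{i}_{\M_{\m}}(S)$ in degrees divisible by $l$; a finite-length graded module has only finitely many nonzero graded pieces and there are only finitely many relevant $\m$, so these modules vanish for all $i<3$ once $l\gg0$, while the remaining primes of $S^{<l>}$ are \CM\ by the arguments above applied to $S^{<l>}$. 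Hence $S^{<l>}$ is \CM\ for all $l\gg0$, proving the theorem; ``in particular $S$ is generalized \CM'' is exactly what was shown. The graded local cohomology bookkeeping here is that of \cite{P}.

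The decisive point, and the reason no hypothesis on $G$ or $K$ is needed here in contrast to Theorems \ref{main}, \ref{g2} and \ref{unramify}, is that $\mathbf{P}/G$ has dimension $2$: normality then \emph{already} forces the \CM\ property. For $n=3$ one would instead face the normal threefold $(\mathbb{P}^{2}_{A})/G$, for which normality yields only $(S_{2})$; this is precisely the obstruction those theorems circumvent with their extra hypotheses. The steps that still require care are: the identification $\operatorname{Proj}S\cong\mathbf{P}/G$ (resting on normality of $\operatorname{Proj}S$ and the lying-over/Cohen-Seidenberg description of the fibres), the passage from the \CM\ property of $\operatorname{Proj}S$ to that of $S$ off the irrelevant ideals when $S$ is not standard graded (handled via a standard graded Veronese subring), and making ``$l\gg0$'' uniform over the finitely many bad maximal ideals.
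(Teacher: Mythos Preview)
Your argument is correct and takes a genuinely different route from the paper. The paper reduces via Sylow subgroups to the case of a $p$-group, passes to the local rings $\Oo = A_P$ at primes $P$ of $A$ above $p$, and introduces an auxiliary class $\G_\Oo$ of graded $\Oo$-algebras (see \ref{dim3}); it then proves by induction on $|G|$, using the Ellingsrud--Skjelbred spectral sequences, that $\Oo[X,Y]^G \in \G_\Oo$ (Theorem~\ref{d2local}), and finally extracts the Veronese statement from the defining properties of $\G_\Oo$ (Proposition~\ref{d2prop}). Your argument bypasses all of this machinery with a single observation: $S$ is normal of dimension $3$, hence satisfies $(S_2)$, so every localization at a prime of height at most $2$ is already \CM. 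The non-\CM\ locus is therefore a finite set of closed points, each $S_{\M_\m}$ is automatically generalized \CM\ by inspection of its punctured spectrum, and Reynolds together with the finiteness of primes dividing $|G|$ gives the uniform bound on $l$. This is exactly the ``normal surfaces are \CM'' principle, and your closing paragraph correctly pinpoints why the same shortcut is unavailable for $n=3$.

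Two small remarks. First, your detour through $\operatorname{Proj} S \cong \mathbb{P}^1_A/G$, while correct, is more than you need: since a Noetherian graded ring is \CM\ if and only if its localizations at all \emph{graded} maximal ideals are \CM\ (\cite[Exercise~2.1.27]{BH}), you may work entirely with the $\M_\m$ and never confront non-homogeneous maximal ideals at all; this also dissolves the standard-gradedness issue you flag at the end. Second, the paper's spectral-sequence method does extract finer information (vanishing of $H^2_{R_+}(R)_0$ and $\Oo$-freeness of $H^2_{R_+}(R)_n$ for $n \ll 0$, not merely finite length), and this precision is exactly what the parallel three-variable argument in Theorem~\ref{main-body} requires. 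Your approach trades that precision for simplicity, which is the right trade here.
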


\s We consider the Gorenstein property of ring of invariants. We show the following surprising  result:
\begin{theorem}\label{gor}
Let $A$ be ring of integers in a global field $K$. Let $G \subseteq GL_n(A)$ be a finite group. Set $R = A[X_1, \ldots, X_n]$ and $T = K[X_1, \ldots, X_n]$. Let $G$ act on $R$ ( resp. $T$) fixing $A$ (resp. $K$). Set $S = R^G$ and $U = T^G$. Assume $S$ is \CM.
Then the following assertions are equivalent.
\begin{enumerate}[\rm (i)]
  \item $S$ is Gorenstein.
  \item $U$ is Gorenstein.
\end{enumerate}
\end{theorem}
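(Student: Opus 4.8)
The plan is the following. The key preliminary observation is that $U$ is just a localization of $S$: since $G$ is finite and $K$ is flat over $A$, forming $G$--invariants commutes with $-\otimes_AK$, so
\[
U=T^G=(R\otimes_AK)^G=R^G\otimes_AK=S\otimes_AK=S\bigl[(A\setminus\{0\})^{-1}\bigr].
\]
Hence $\mathrm{(i)}\Rightarrow\mathrm{(ii)}$ is immediate (a localization of a Gorenstein ring is Gorenstein), and $U$ is \CM\ whenever $S$ is; so the content is the converse $\mathrm{(ii)}\Rightarrow\mathrm{(i)}$, and from now on I assume $S$ \CM\ and $U$ Gorenstein.

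The first reduction is to a discrete valuation ring. Every prime of $S$ contracts to $(0)$ or to a maximal ideal of $A$; over $(0)$ we have $S_\p=U_\p$, Gorenstein. So fix a maximal ideal $P$ of $A$, put $(\Oo,(t)):=(A_P,PA_P)$, and $S':=S\otimes_A\Oo=(R\otimes_A\Oo)^G$; it is enough to prove $S'$ Gorenstein knowing it is \CM\ and that $S'[t^{-1}]=U$ is Gorenstein. Since $S'$ is a domain, $t$ is a nonzerodivisor on $S'$ and on its canonical module $\omega_{S'}$ (which exists, $S'$ being \CM\ and essentially of finite type over the regular ring $\Oo$), so $\overline S:=S'/tS'$ is \CM\ of dimension $\dim S'-1$ with canonical module $\omega_{S'}/t\omega_{S'}$. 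Since Gorensteinness deforms along a nonzerodivisor, and the primes avoiding $t$ have just been handled, $S'$ is Gorenstein iff $\overline S$ is. As $U$ and $\overline S$ are positively graded over the fields $K$ and $\Oo/(t)$, and for such rings Gorensteinness is equivalent to having type $1$ at the irrelevant maximal ideal, it remains to compare those two types. The bridge is the augmentation ideal $S'_{+}:=\bigoplus_{i>0}S'_i$, whose quotient $S'/S'_{+}=\Oo$ specializes well on both sides: $\Oo\otimes_\Oo K=U/U_{+}=K$ and $\Oo\otimes_\Oo\Oo/(t)=\overline S/\overline S_{+}=\Oo/(t)$. Putting $F:=\omega_{S'}\otimes_{S'}\Oo=\omega_{S'}/S'_{+}\omega_{S'}$, a finitely generated graded $\Oo$--module, and using that forming $\omega$ commutes with the flat base changes $-\otimes_\Oo K$ and $-\otimes_\Oo\Oo/(t)$ here, one gets
\[
\operatorname{type}(U_{U_{+}})=\dim_K(F\otimes_\Oo K),\qquad\operatorname{type}(\overline S_{\overline S_{+}})=\dim_{\Oo/(t)}(F/tF).
\]
Over the discrete valuation ring $\Oo$ these two numbers coincide exactly when $F$ is $\Oo$--torsion free, and then $U$ Gorenstein forces the common value to be $1$, so $\overline S$, hence $S'$, hence $S$, is Gorenstein.

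Everything therefore reduces to showing that $F=\omega_{S'}\otimes_{S'}\Oo$ is $\Oo$--torsion free, i.e.\ that the number of generators of the canonical module does not jump from the generic fibre $U$ to the special fibre $\overline S$. This is the step I expect to be the main obstacle: a general flat \CM\ family over $\Oo$ need not have this property, so the hypothesis that $S'=(R\otimes_A\Oo)^G$ is an invariant ring has to enter. The natural line of attack is the finite--duality isomorphism $\omega_{R\otimes_A\Oo}\cong\Hom_{S'}(R\otimes_A\Oo,\omega_{S'})$ of $G$--equivariant $S'$--modules ($R\otimes_A\Oo$ being module--finite over $S'$, both \CM\ of the same dimension, with $G$ acting trivially on $\omega_{S'}$), combined with the fact that $\omega_{R\otimes_A\Oo}$ is, up to the determinant twist, $R\otimes_A\Oo$ itself and hence a free $\Oo$--module; taking $G$--invariants (or using the trace map) one aims to transport this freeness to $\omega_{S'}$ and then to $F$. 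Once $F$ is $\Oo$--free the remaining base--change compatibilities (that $-\otimes_\Oo K$ and $-\otimes_\Oo\Oo/(t)$ commute with $\omega$ and with $S'_{+}$; flatness of $S'$ and of $\omega_{S'}$ over $\Oo$; finite presentation of $S'/S'_{+}$; $\Oo$ of global dimension one) are routine, and the theorem follows.
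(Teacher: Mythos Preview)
Your reductions are correct and coincide with the paper's: $U$ is a localization of $S$, so $(\mathrm{i})\Rightarrow(\mathrm{ii})$ is immediate; for the converse one localizes at a maximal ideal of $A$, works over the DVR $(\Oo,(t))$, and reduces to showing that $\overline{S}:=S'/tS'$ is Gorenstein. The gap is exactly the step you yourself flag: you never prove that $F=\omega_{S'}/S'_+\omega_{S'}$ is $\Oo$-torsion-free, and the line you sketch does not close it. The trace splitting of $S'\hookrightarrow R'$ requires $|G|$ to be a unit in $\Oo$, which fails precisely at the primes that matter; and even granting the finite-duality isomorphism $\omega_{R'}\cong\Hom_{S'}(R',\omega_{S'})$, taking $G$-invariants yields $\Hom_{S'}(R'_G,\omega_{S'})$ rather than $\omega_{S'}$, so there is no evident way to extract information about the minimal generators of $\omega_{S'}$. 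At best such arguments show each graded piece $(\omega_{S'})_i$ is $\Oo$-free, which constrains the Hilbert function of $\omega_{S'}$ but says nothing about the quotient $F$.

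The paper sidesteps the type comparison and argues with Hilbert series via local cohomology. From the \CM\ hypothesis on $S'$ and the exact sequence linking cohomology with supports $\n=(t,S'_+)$, $S'_+$ and $U_+$, one obtains
\[
0\to H^n_{S'_+}(S')\to H^n_{U_+}(U)\to H^{n+1}_{\n}(S')\to 0.
\]
Each $H^n_{S'_+}(S')_i$ is finitely generated over $\Oo$ and injects into its localization at $t$, hence is $\Oo$-free of some rank $a_i$; one then reads off $H^{n+1}_{\n}(S')_i\cong E^{a_i}$ (with $E$ the injective hull of $k=\Oo/(t)$) and, via $0\to S'\xrightarrow{t}S'\to\overline{S}\to 0$, that $H^n_{\n}(\overline{S})_i\cong k^{a_i}$. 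Gorensteinness of $U$ forces the dual of $H^n_{U_+}(U)$ to be $U(a)$ for some shift $a$, which pins down the $a_i$ and shows that $\omega_{\overline{S}}$ has the same Hilbert series as $\overline{S}(a)$. The invariant-ring hypothesis now enters, but in a simpler place than in your plan: since $t$ is $R'$-regular, $S'/tS'$ embeds in $(R'/tR')^G$, so $\overline{S}$ is a \emph{domain}; and for a positively graded \CM\ domain a rank-one module with the Hilbert series of $\overline{S}(a)$ must be isomorphic to $\overline{S}(a)$. Hence $\overline{S}$ is Gorenstein, and so is $S$. In effect the paper replaces your ``number of generators does not jump'' obstruction by a Hilbert-series equality together with the domain property of the special fibre, which follows for free from the invariant-ring setup.
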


As an application of Theorem \ref{gor} we get:
\begin{corollary}\label{sln}
Let $A$ be the ring of integers of a number field $K$.  Let $G \subseteq SL_n(A)$ be a finite group. Let $G$ act linearly on $R = A[X_1, \ldots, X_n]$ (fixing $A$) and let $S = R^G$ be the ring of invariants. If $S$ is \CM \ then it is Gorenstein.
\end{corollary}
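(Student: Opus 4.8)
The plan is to transfer the question from $A$ to the base field $K$ and then invoke the classical theory of invariant rings in characteristic zero. Since $G \subseteq SL_n(A) \subseteq SL_n(K)$ and $S = R^G$ is assumed to be \CM, the hypotheses of Theorem \ref{gor} are met, and that theorem reduces us to proving that $U = T^G = K[X_1, \ldots, X_n]^G$ is Gorenstein. So the whole statement of Corollary \ref{sln} becomes an immediate consequence of Theorem \ref{gor}, provided we know the corresponding assertion over the field $K$.

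To finish I would appeal to Watanabe's theorem on Gorenstein invariant rings. Since $K$ is a number field it has characteristic zero, so $|G|$ is a unit in $K$ and we are in the non-modular situation; in particular $U$ is \CM \ by the Hochster--Eagon theorem. Moreover a pseudo-reflection over a field in which the group order is invertible is diagonalizable with eigenvalues $1, \ldots, 1, \zeta$ for some root of unity $\zeta \neq 1$, hence has determinant $\zeta \neq 1$; thus the hypothesis $G \subseteq SL_n(K)$ forces $G$ to contain no pseudo-reflections. Watanabe's theorem then applies and yields that $U = K[X_1, \ldots, X_n]^G$ is Gorenstein precisely because $G \subseteq SL_n(K)$. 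Feeding this back through Theorem \ref{gor} gives that $S$ is Gorenstein.

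I do not expect a genuine obstacle here: the substantive content is packaged in Theorem \ref{gor}, and the field-theoretic input is a classical result. The only points that require care are bookkeeping: that the standing assumption ``$S$ is \CM'' needed for Theorem \ref{gor} is exactly the hypothesis we are given, and that the characteristic-zero (equivalently, non-modular) hypothesis of Watanabe's theorem is automatic because $A$ is the ring of integers of a number field, so $|G|$ is invertible in $K$.
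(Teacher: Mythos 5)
Your argument is exactly the paper's: reduce via Theorem \ref{gor} to the Gorenstein property of $U = K[X_1,\ldots,X_n]^G$, which follows from Watanabe's theorem (cited in the paper as \cite[6.4.9]{BH}) since $G \subseteq SL_n(K)$ and $K$ has characteristic zero. Your extra remarks on the non-modular setting and the absence of pseudo-reflections are correct bookkeeping and do not change the route.
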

\s \label{rep}Next we discuss some representation theoretic questions which naturally arise in our study. Let $A$ be ring of integers in a global field and let $P$ be a maximal ideal of $A$. Set $(\Oo, (\pi)) = (A_P, PA_P)$. As $G$ is a subgroup of $GL_n(A)$, it is naturally a subgroup of $GL_n(\Oo)$. We assume that $\charp \Oo/\pi \Oo$ divides the order of $G$. Let $\widehat{\Oo}$ be the completion of $\Oo$ with respect to $(\pi)$. Then $G$ can be considered as a subgroup of $GL_n(\widehat{\Oo})$.  Thus $V = \widehat{\Oo}^n$ is a maximal \CM \ (= MCM)  $\widehat{\Oo}[G]$-module. We note that if $M$ is a MCM $\widehat{\Oo}[G]$-module then $M$ is free as an $\widehat{\Oo}$-module and so determines a representation (not necessarily injective) $G \rt GL_n(M)$ and so we may consider ring of invariants of $S(M) = S_{\widehat{\Oo}}(M)$ (the symmetric algebra of $M$ as an $\widehat{\Oo}$-module). Thus it is natural to consider representation theory of MCM $\widehat{\Oo}[G]$-modules
The simplest non-trivial case is when $G = \Z/2\Z$. We prove
\begin{proposition}
  [ with hypotheses as in \ref{rep}]\label{2-rep}
Let $G = \Z/2 \Z$. Then we have
\begin{enumerate}[\rm (1)]
  \item If $\widehat{\Oo}$ is of mixed characteristic then $\widehat{\Oo}[G] = \widehat{\Oo}[X]/(X^2 - 1)$ has finite representation type. Futhermore  if $M$ is an indecomposable MCM $\widehat{\Oo}[G]$-module then $S(M)^G$ is \CM.
  \item If $\widehat{\Oo}$ is equi-characteristic then $\widehat{\Oo}[G] = \widehat{\Oo}[X]/(X - 1)^2$ has infinite representation type. Nevertheless  if $M$ is an indecomposable MCM $\widehat{\Oo}[G]$-module then $S(M)^G$ is \CM.
\end{enumerate}
\end{proposition}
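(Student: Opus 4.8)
I would begin by setting $G=\langle\si\rangle$, $\si^{2}=1$, and $L=\operatorname{Frac}(\wh{\Oo})$. Since $\wh{\Oo}$ is a complete discrete valuation ring and $\wh{\Oo}[G]$ is module-finite over it, a finitely generated $\wh{\Oo}[G]$-module is MCM \ff\ it is free over $\wh{\Oo}$; thus an indecomposable MCM $\wh{\Oo}[G]$-module is exactly an indecomposable $\wh{\Oo}[G]$-lattice. The plan is to classify these lattices --- which simultaneously pins down the representation type --- and then to check case by case that $S(M)^{G}$ is \CM. The second step will be uniform and short, so the real content is the classification.

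In the equi-characteristic case $\charp\wh{\Oo}=2$, so $\wh{\Oo}[G]=\wh{\Oo}[X]/(X-1)^{2}\cong\wh{\Oo}[Y]/(Y^{2})$ with $Y=X-1$, and a lattice is a free $\wh{\Oo}$-module $F$ equipped with a square-zero $\wh{\Oo}$-linear endomorphism $\phi$ (the action of $Y$). From $\operatorname{im}\phi\subseteq\ker\phi$ and the Smith normal form over the principal ideal domain $\wh{\Oo}$, one sees that $(F,\phi)$ is a direct sum of copies of $(\wh{\Oo},0)$ and of the rank-two lattices $M_{n}=\bigl(\wh{\Oo}^{2},\bigl(\begin{smallmatrix}0&\pi^{n}\\0&0\end{smallmatrix}\bigr)\bigr)$, $n\ge 0$; the $M_{n}$ are indecomposable and pairwise non-isomorphic, since the $\wh{\Oo}[G]$-module $M_{n}/YM_{n}\cong\wh{\Oo}/(\pi^{n})\oplus\wh{\Oo}$ has torsion submodule $\wh{\Oo}/(\pi^{n})$; hence the representation type is infinite. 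In the mixed-characteristic case $X^{2}-1$ is separable over $L$, so $\wh{\Oo}[G]$ is an $\wh{\Oo}$-order in $L\times L$; I would observe that its over-orders are $\Lambda_{k}=\{(u,v)\in\wh{\Oo}\times\wh{\Oo}:u\equiv v\bmod\pi^{k}\}\cong\wh{\Oo}[t]/(t^{2}-\pi^{k}t)$ for $0\le k\le e$, where $2\wh{\Oo}=(\pi^{e})$, each a complete intersection and hence Gorenstein. So $\wh{\Oo}[G]$ is a Bass order, whence it has finite representation type and every indecomposable lattice has $\wh{\Oo}$-rank at most $2$ (alternatively one invokes the classical lattice classification for $\Z/2\Z$ over complete discrete valuation rings).

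For the \CM\ step: if $M$ has $\wh{\Oo}$-rank $1$ then $S(M)=\wh{\Oo}[T]$ with $\si(T)=\pm T$, so $S(M)^{G}$ is $\wh{\Oo}[T]$ or $\wh{\Oo}[T^{2}]$, a polynomial ring over $\wh{\Oo}$ and therefore regular. If $M$ is indecomposable of rank $2$, then $\ker(\si-1)$ is a nonzero saturated $\wh{\Oo}$-submodule, hence a free direct summand, so $M$ has an $\wh{\Oo}$-basis $e_{1},e_{2}$ with $\si e_{1}=e_{1}$, $\si e_{2}=ae_{1}+be_{2}$; then $\si^{2}=1$ forces $b=\pm1$, and indecomposability forces $a\ne 0$ (and $b=-1$ when $\charp\wh{\Oo}\ne 2$). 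Writing $S(M)=\wh{\Oo}[X_{1},X_{2}]$ with $\si$ acting through $e_{1},e_{2}$, put $N=X_{2}\,\si(X_{2})$; one computes $X_{2}+\si(X_{2})=aX_{1}$, so $X_{2}$ is a root of the monic quadratic $T^{2}-aX_{1}T+N$ over $B:=\wh{\Oo}[X_{1},N]$, the ring $S(M)$ is $B$-free on $\{1,X_{2}\}$, and for $c_{0},c_{1}\in B$ the element $c_{0}+c_{1}X_{2}$ is $\si$-fixed \ff\ $c_{1}=0$ (using $\charp\wh{\Oo}\ne 2$ in one case, $aX_{1}\ne 0$ in the other). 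Thus $S(M)^{G}=B$, and since $S(M)$ is module-finite over $B$ we get $\dim B=\dim S(M)=3$, so $X_{1},N$ are algebraically independent and $B$ is a polynomial ring over $\wh{\Oo}$, again regular. In every case $S(M)^{G}$ is regular, hence \CM, which establishes both assertions.

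The hard part is the classification: exhibiting all indecomposable $\wh{\Oo}[G]$-lattices and showing that each has $\wh{\Oo}$-rank $\le 2$, and that in mixed characteristic there are only finitely many. The equi-characteristic case should fall out cleanly from the square-zero normal form; the mixed-characteristic finiteness I would obtain from the Bass-order observation above (or cite it from integral representation theory). The ``nevertheless'' of part (2) then records the punchline: although there are infinitely many indecomposables in the equi-characteristic case, the associated invariant rings are all regular --- no worse than the three in part (1).
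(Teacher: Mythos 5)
Your proposal is correct, but it takes a genuinely different route from the paper. The paper handles both parts by citation: in mixed characteristic it quotes Takahashi to get finite representation type (with indecomposables $M,N,T$, $\m=M\oplus N$), and in equicharacteristic it quotes Buchweitz--Greuel--Schreyer for the list $\Oo, T, M_n$; then, for every indecomposable of $\Oo$-rank two, the \CM\ property of $S(M)^G$ is deduced from Theorem 1.4 of part~1 of the paper (in the mixed case for the regular module via the observation that $G\to \operatorname{Aut}(S(T)/\pi S(T))$ is injective). You instead prove the classification by hand --- the square-zero normal form over the DVR in equicharacteristic, and the Bass-order argument (all over-orders $\Lambda_k\cong\wh{\Oo}[t]/(t^2-\pi^k t)$ are Gorenstein) in mixed characteristic --- and, more importantly, you replace the appeal to part~1 by an explicit computation: any indecomposable rank-two lattice has the triangular form $\si e_1=e_1$, $\si e_2=ae_1+be_2$, and $S(M)^G=\wh{\Oo}[X_1,\,X_2\si(X_2)]$ is a polynomial ring, hence regular. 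This buys you a self-contained proof, a stronger conclusion (regularity, not just \CM), and an argument that is uniform in the ramified mixed-characteristic case, where the count of indecomposables is $e+2$ with $2\wh{\Oo}=(\pi^e)$ rather than three; the paper's route is shorter but leans on external results and on part~1. Two small points you should write out: that $\ker(\si-1)\neq 0$ (in mixed characteristic via the eigenvalue argument over the fraction field, in equicharacteristic because $(\si-1)^2=0$), and that $\{1,X_2\}$ is a $B$-basis of $S(M)$ --- one line: if $X_2\in\operatorname{Frac}(B)$ it would be $\si$-fixed, contradicting $\si(X_2)-X_2\neq0$; also the finiteness of representation type for Bass orders is used as a citation, which is on par with the paper's own use of Takahashi.
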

Let $T$ be a complete \CM \ local ring. A maximal \CM \ $T$-module $M$ is called rigid if $\Ext^1_T(M, M) = 0$. We show
\begin{proposition}
  [ with hypotheses as in \ref{rep}]\label{3-rep}
Let $G = \Z/3 \Z$. Assume that $\widehat{\Oo}$ is of mixed characteristic and that $\zeta$ a primitive third root of unity is in $\widehat{\Oo}$. Also assume that residue field of $\widehat{\Oo}$ has characteristic three.  Then we have $\widehat{\Oo}[G] = \widehat{\Oo}[X]/(X^3 - 1)$. If $M$ is an indecomposable rigid MCM $\widehat{\Oo}[G] $-module then $S(M)^G$ is \CM.
\end{proposition}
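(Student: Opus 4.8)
The plan is as follows. Write $\Lambda = \widehat{\Oo}[G]$. Since $|G|=3$ and $\zeta\in\widehat{\Oo}$, we have $\Lambda = \widehat{\Oo}[X]/(X^3-1) = \widehat{\Oo}[X]/\prod_{i=0}^{2}(X-\zeta^i)$, and because $\charp \widehat{\Oo}/(\pi)=3$ the three roots $1,\zeta,\zeta^2$ are pairwise congruent modulo $\pi$ while staying distinct in $\widehat{\Oo}$. Hence $\Lambda$ is a one‑dimensional complete local ring, reduced, a hypersurface ($\widehat{\Oo}[X]$ modulo one element, $\widehat{\Oo}[X]$ being two‑dimensional regular), with normalization $\widetilde\Lambda = \widehat{\Oo}\times\widehat{\Oo}\times\widehat{\Oo}$ (one copy per root) and regular away from its maximal ideal. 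A MCM $\Lambda$‑module is exactly an $\widehat{\Oo}$‑lattice $M$ with $G$‑action, i.e.\ a representation $G\to GL_r(\widehat{\Oo})$ with $r=\operatorname{rank}_{\widehat{\Oo}}M$, and over $L=\operatorname{Frac}\widehat{\Oo}$ we have $M\otimes_{\widehat{\Oo}}L = \bigoplus_i L_{\chi^i}^{\oplus m_i}$, where $\chi^0,\chi^1,\chi^2$ are the three characters of $G$.

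First I would classify the rigid indecomposable MCM $\Lambda$‑modules. For $\emptyset\neq T\subseteq\Z/3\Z$ put $M_T = \Lambda/\big(\prod_{i\in T}(X-\zeta^i)\big)$, a rank‑$|T|$ lattice with $M_T\otimes L=\prod_{i\in T}L_{\chi^i}$, so $M_{\Z/3\Z}=\Lambda$. Over the hypersurface $\Lambda$ each $M_T$ has a free resolution of period dividing two whose first syzygy is $\Omega M_T\cong M_{T^c}$, and this gives $\Ext^1_\Lambda(M_T,M_T)\cong\ker\big(\prod_{i\in T^c}(X-\zeta^i)\colon M_T\to M_T\big)$, which vanishes because $\prod_{i\in T^c}(X-\zeta^i)$ is a unit in $M_T\otimes L$ and $M_T$ is $\widehat{\Oo}$‑torsion‑free; as $\operatorname{End}_\Lambda(M_T)$ is local, each $M_T$ is a rigid indecomposable MCM module. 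Conversely, $\Lambda$ has finite lattice type (since $p=3$); running through the finite list of indecomposable $\Lambda$‑lattices — equivalently, using the conductor square $\Lambda\hookrightarrow\widehat{\Oo}\times\widehat{\Oo}[X]/(X^2+X+1)\hookrightarrow\widetilde\Lambda$ and the gluing description of lattices — one checks that any indecomposable with $m_i\geq2$ for some $i$ has nonzero self‑$\Ext^1$. Thus the seven modules $M_T$ are exactly the rigid indecomposable MCM $\Lambda$‑modules, and $\operatorname{rank}_{\widehat{\Oo}}M_T=|T|\leq3$.

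It then remains to check case by case that $S(M_T)^G$ is \CM, where $S(M_T)=S_{\widehat{\Oo}}(M_T)$ is a polynomial ring over $\widehat{\Oo}$ in $|T|$ variables. When $|T|=1$, $M_T=\widehat{\Oo}$ with $G$ acting through a character, so $S(M_T)^G$ is $\widehat{\Oo}[Y^3]$ or $\widehat{\Oo}[Y]$, a polynomial ring. When $|T|=2$ and $0\in T$, the $\chi^0$‑eigenvector $X-\zeta^j\in M_T$ ($j$ the other index) is a $G$‑fixed primitive vector; computing the graded $\widehat{\Oo}$‑ranks (via Molien over $L$, the invariant ring being $\widehat{\Oo}$‑flat; the series is $((1-t)(1-t^3))^{-1}$) identifies $S(M_T)^G$ with a polynomial ring $\widehat{\Oo}[z,\operatorname{Norm}(u)]$. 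When $T=\{1,2\}$, $G$ acts on $M_T\cong\widehat{\Oo}^2$ by a non‑diagonalizable order‑three matrix with eigenvalues $\zeta,\zeta^2$; the series is $(1+t^3)\big((1-t^2)(1-t^3)\big)^{-1}$, and one checks $S(M_T)^G$ is generated over $\widehat{\Oo}$ by one quadratic and two cubic invariants subject to a single sextic relation — so $S(M_T)^G$ is a hypersurface over $\widehat{\Oo}$, hence \CM; equivalently, modulo $\pi$ it becomes a polynomial extension $k[x^2,x^3][N_0]$ of the one‑dimensional cusp $k[x^2,x^3]$, which is \CM, and $\pi$ is a nonzerodivisor. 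Finally, when $T=\Z/3\Z$, $M_T=\Lambda$ is the regular representation and $S(\Lambda)=\widehat{\Oo}[X_1,X_2,X_3]$ with $G$ permuting the variables cyclically; since $2$ is a unit in $\widehat{\Oo}$ (the residue characteristic is $3$), the classical characteristic‑$\neq2$ description applies — with a short Gauss‑lemma argument to pass from $L$ to $\widehat{\Oo}$ — giving $S(\Lambda)^G = \widehat{\Oo}[e_1,e_2,e_3]\,\oplus\,\Delta\cdot\widehat{\Oo}[e_1,e_2,e_3]$, with $e_i$ the elementary symmetric functions and $\Delta$ the Vandermonde, free of rank two over the polynomial ring $\widehat{\Oo}[e_1,e_2,e_3]$, hence \CM.

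The main obstacle is the converse half of the classification in the second step: showing that no indecomposable $\Lambda$‑lattice carrying a repeated rational character is rigid, which requires a firm hold on the (finite, since $p=3$) list of indecomposable $\widehat{\Oo}[C_3]$‑lattices together with the conductor‑square description. A secondary point of care is the explicit invariant theory in the third step — the Molien series are routine, but identifying the generators and the single relation of $S(M_{\{1,2\}})^G$ over $\widehat{\Oo}$, rather than merely over $L$, and checking that no extra denominator‑cleared generators appear, needs a degree‑by‑degree argument.
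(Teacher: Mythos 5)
Your skeleton is the same as the paper's: identify the rigid indecomposable MCM $\widehat{\Oo}[G]$-modules as the seven cyclic modules $M_T=\widehat{\Oo}[X]/\bigl(\prod_{i\in T}(X-\zeta^i)\bigr)$ and then verify \CM ness of $S(M_T)^G$ case by case according to $\rank_{\widehat{\Oo}}M_T$. The difference is that the paper settles the two hard points by citation, while you attempt them directly and leave both incomplete. The essential gap is the classification converse: the proposition quantifies over \emph{all} rigid indecomposable MCM modules, so you must show the $M_T$ exhaust them. Your forward direction (each $M_T$ is rigid and indecomposable, via the factorization $X^3-1=g_Tg_{T^c}$ and the two-periodic resolution) is fine, but the converse is exactly what you flag as ``the main obstacle,'' and the route you propose is not sound as stated. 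The claim that $\Lambda=\widehat{\Oo}[X]/(X^3-1)$ has finite lattice type ``since $p=3$'' is unjustified: the hypotheses of \ref{rep} allow $\widehat{\Oo}$ to be arbitrarily ramified (it already contains $\zeta$, hence is ramified over $\Z_3$), and modulo $\pi$ the three branches of $\Lambda$ are pairwise tangent to order $v(\zeta-1)$, so for higher ramification the Drozd--Roiter-type conditions for finite CM type are in jeopardy. Moreover, even granting finiteness, your dichotomy ``repeated rational character $\Rightarrow$ non-rigid, multiplicity-free $\Rightarrow$ some $M_T$'' skips the non-cyclic multiplicity-free lattices that exist when $v(\zeta-1)\geq 2$, e.g. the partially glued modules $\{(a,b)\in\widehat{\Oo}^2: a\equiv b \bmod \pi^{c}\}$ with $0<c<v(\zeta-1)$; these would also have to be shown non-rigid. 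The paper does none of this by hand: it quotes Dao--Huneke \cite[4.3]{DH} for precisely the statement that the rigid modules over $\Oo[X]/(X^3-1)$ are $\Oo[X]/(f_i)$, $\Oo[X]/(f_if_j)$ and the ring itself. Without that citation or a complete substitute, your argument does not get started for a general rigid $M$.

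On the case analysis: your rank-one case and your rank-two case with $0\in T$ are correct (the invariant ring is indeed the polynomial ring $\widehat{\Oo}[z,N(w)]$, since that subring is normal, has the right fraction field, and $S(M_T)^G$ is integral over it), and your treatment of the regular representation via the classical decomposition $\widehat{\Oo}[e_1,e_2,e_3]\oplus\Delta\,\widehat{\Oo}[e_1,e_2,e_3]$ (valid because $2$ is a unit) is a pleasant self-contained alternative to the paper's citation of \cite[Theorem 23]{A}. But for $M_{\{1,2\}}$ you only assert that $a^2+ab+b^2$ together with two cubics generates the integral invariant ring with a single sextic relation; the crucial saturation statement (no further denominator-cleared invariants, equivalently normality of the integral hypersurface model) is exactly what you defer to ``a degree-by-degree argument,'' and such clearing genuinely occurs --- already $ab(a+b)=(u^3-v^3)/(-3(\zeta-\zeta^2))$ is an integral invariant obtained by dividing by a non-unit. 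The paper sidesteps all three rank-two modules at once by invoking \cite[1.4]{P}. So the proposal is a plausible programme, but the two steps where the paper relies on \cite{DH} and \cite{P} are genuine, acknowledged gaps in your write-up, and the first of them is fatal to the proof as it stands.
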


\s \label{ufd}Let $K$ be the quotient field of $A$. If $A$ is a UFD and if there does not exist a non-trivial homomorphism from $G \rt K^*$ then the usual proof, see \cite[1.5.7]{S}, shows that $S = R^G$ is also a UFD. However there are many cases where $A$ is NOT a UFD. Let $C(T)$ denote the class group of a normal domain $T$. As $R = A[X_1, \ldots, X_n]$ we get   $C(R) \cong C(A)$ is a finite group.
There is a standard homomorphism $i \colon C(S) \rt C(R)$, see
\cite[p.\ 489, Proposition 14]{B}. We show
\begin{theorem}
\label{class-inj}(with hypotheses as in \ref{ufd}). Assume there does not exist a non-trivial homomorphism from $G \rt K^*$. Then
the map $i$ is injective. Furthermore if $C(A) \neq 0$ then $C(S) \neq 0$.
\end{theorem}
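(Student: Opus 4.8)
The plan is to work inside the tower of Krull domains $A \subseteq S \subseteq R$ and to exploit the functoriality of the divisor class group with respect to extensions satisfying Bourbaki's condition (PDE).

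First I would assemble the standard facts. Since $A$ is Dedekind it is a Krull domain; $R = A[X_1,\ldots,X_n]$ is a regular (hence normal Noetherian, hence Krull) domain with $R^{*} = A^{*}$, and, as already noted, $C(R) \cong C(A)$ via the extension-of-ideals isomorphism $\beta\colon [\aF] \mapsto [\aF R]$. The invariant ring $S = R^{G}$ is a Krull domain (the ring of invariants of a Krull domain under a finite group is Krull), $R$ is integral over $S$, and $\operatorname{Frac}(S) = \operatorname{Frac}(R)^{G}$. I would then check that both inclusions $A \subseteq S$ and $S \subseteq R$ satisfy (PDE): for $A \subseteq S$ this is automatic since $\dim A = 1$, and for $S \subseteq R$ it follows from integrality together with the going-down theorem (valid as $S$ is normal). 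So \cite[p.\ 489, Proposition 14]{B} supplies class maps $\alpha\colon C(A) \rt C(S)$ and $i\colon C(S) \rt C(R)$, and comparing the defining formulas (equivalently, by functoriality of the (PDE) class maps) gives $i\circ\alpha = \beta$.

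Next, for the injectivity of $i$ --- the heart of the matter --- I would take a divisorial fractional ideal $\mathfrak{b}$ of $S$ with $[\mathfrak{b}] \in \ker i$, so that $(\mathfrak{b}R)^{**} = fR$ is principal for some $f \in \operatorname{Frac}(R)^{*}$. Since $\mathfrak{b} \subseteq S = R^{G}$ is fixed by $G$, the ideal $\mathfrak{b}R$ and its divisorial closure $fR$ are $G$-stable, so $u_{\sigma} := \sigma(f)/f$ lies in $R^{*} = A^{*}$ for every $\sigma \in G$. The key point is that $\sigma \mapsto u_{\sigma}$ is a group homomorphism $G \rt A^{*}$ --- not merely a $1$-cocycle --- precisely because $G$ fixes $A$ pointwise: from $u_{\sigma\tau} = \sigma(u_{\tau})\,u_{\sigma}$ and $\sigma(u_{\tau}) = u_{\tau}$ one gets $u_{\sigma\tau} = u_{\sigma}u_{\tau}$. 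As $A^{*} \subseteq K^{*}$ and, by hypothesis, there is no nontrivial homomorphism $G \rt K^{*}$, all $u_{\sigma} = 1$, i.e.\ $f \in \operatorname{Frac}(R)^{G} = \operatorname{Frac}(S)$. Replacing $\mathfrak{b}$ by $\mathfrak{c} := f^{-1}\mathfrak{b}$ (allowed since $f \in \operatorname{Frac}(S)^{*}$), I am reduced to showing that a divisorial fractional ideal $\mathfrak{c}$ of $S$ with $(\mathfrak{c}R)^{**} = R$ satisfies $\mathfrak{c}^{**} = S$. To see this, fix a height-one prime $\mathfrak{p}$ of $S$ and, by lying-over (using integrality of $R$ over $S$), a height-one prime $\mathfrak{P}$ of $R$ with $\mathfrak{P} \cap S = \mathfrak{p}$; the normalized valuations then satisfy $v_{\mathfrak{P}}|_{\operatorname{Frac}(S)} = e\,v_{\mathfrak{p}}$ with $e = e(\mathfrak{P}/\mathfrak{p}) \geq 1$, whence $0 = v_{\mathfrak{P}}\big((\mathfrak{c}R)^{**}\big) = \min_{x \in \mathfrak{c}} v_{\mathfrak{P}}(x) = e\,v_{\mathfrak{p}}(\mathfrak{c}^{**})$ forces $v_{\mathfrak{p}}(\mathfrak{c}^{**}) = 0$. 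As this holds for every height-one prime of $S$, $\mathfrak{c}^{**} = S$, so $[\mathfrak{b}] = [\mathfrak{c}] = 0$.

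For the final assertion: since $i\circ\alpha = \beta$ and $\beta$ is an isomorphism, $\alpha\colon C(A) \rt C(S)$ is injective, so $C(A) \neq 0$ forces $C(S) \neq 0$. (This last part uses neither the hypothesis on $G$ nor the injectivity of $i$; together with the preceding paragraph it in fact shows $i$ is an isomorphism, so $C(S) \cong C(A)$.) The step I expect to require the most care is the reduction $(\mathfrak{c}R)^{**} = R \Rightarrow \mathfrak{c}^{**} = S$: the dominance of height-one primes of $S$ by height-one primes of $R$, the ramification identity $v_{\mathfrak{P}}|_{\operatorname{Frac}(S)} = e\,v_{\mathfrak{p}}$, and the bookkeeping with divisorial closures, together with the verification of (PDE) and of $i\circ\alpha = \beta$ --- all standard (see \cite{B} and Fossum's book on divisor class groups), but it must be assembled correctly. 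The homomorphism computation $G \rt A^{*} \subseteq K^{*}$, while the conceptual core, is immediate once one uses that $G$ acts trivially on $A$.
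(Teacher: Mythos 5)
Your argument is correct, and its core coincides with the paper's: in both proofs the kernel element is represented upstairs by a principal divisorial object ($i(D)=\mathrm{div}_R(x)$ in the paper, $(\mathfrak{b}R)^{**}=fR$ for you), the generator is a semi-invariant whose factor $\sigma\mapsto \sigma(f)/f$ lands in $R^{*}=A^{*}$ and is a genuine homomorphism $G\to K^{*}$ because $G$ fixes $A$ pointwise, the hypothesis kills it, and one descends. The differences are in the bookkeeping and in the second assertion. For the descent, the paper normalizes to an effective divisor, gets $x\in R$, uses that the primes of $R$ over a height-one prime $\mathfrak{q}$ of $S$ are $G$-conjugate (so $i(\mathfrak{q})=\theta(\mathfrak{q})\sum_{Q\cap S=\mathfrak{q}}Q$) and compares coefficients in $X(R)$; you instead reduce to $(\mathfrak{c}R)^{**}=R$ and check $v_{\mathfrak{p}}(\mathfrak{c})=0$ at every height-one prime of $S$ via lying over/going down and $v_{\mathfrak{P}}|_{\operatorname{Frac}(S)}=e\,v_{\mathfrak{p}}$ — same substance, no conjugacy input needed, but you do need (as you note) that every height-one prime of $S$ is dominated by a height-one prime of $R$, and that Bourbaki's (PDE) class map agrees with the paper's explicit map with $e_Q=\ell(R_Q/(Q\cap S)R_Q)$ (lengths equal ramification indices — worth one line). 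For the statement $C(A)\neq 0\Rightarrow C(S)\neq 0$ your route is genuinely different: the paper argues directly that for a nonprincipal prime $\mathfrak{p}\subseteq A$ the height-one prime $\mathfrak{p}R\cap S$ cannot be principal (a degree argument forces a generator to lie in $A$ and to generate $\mathfrak{p}$), whereas you use the tower $A\subseteq S\subseteq R$, the (PDE) maps and functoriality $i\circ\alpha=\beta$ with $\beta:C(A)\to C(R)$ the standard isomorphism. The functoriality does hold here (ramification indices multiply in the tower, and $e(\mathfrak{p}R/\mathfrak{p})=1$ forces the intermediate indices to be $1$), so your version buys more: $\alpha$ is injective without any hypothesis on $G$, and combined with part (1) you get that $i$ is an isomorphism, i.e.\ $C(S)\cong C(A)$, which strengthens the paper's conclusion; the cost is reliance on the Bourbaki/Fossum machinery, where the paper's part (2) is elementary and self-contained.
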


We now describe in brief the contents of this paper.
In section two we discuss some preliminaries  that we need. In section three we give proof of local case of  Theorems \ref{main} and \ref{unramify}. In section four we give proofs of Theorems \ref{main} and \ref{unramify}. In section five we give proof of Theorem \ref{g2}.   In section six we give proof of Theorem \ref{d2}.
In section seven we give proof of Theorem \ref{gor} and Corollary \ref{sln}.
In section eight we consider a few representation theoretic considerations that
occur naturally in our work. In section nine we give proof of Theorem \ref{class-inj}. In section ten we indicate a fact of spectral sequences that we need.

\section{Preliminaries}
In this section we prove some preliminary results on local to global property of generalized  \CM \ property for invariant rings. We also study the relation between invariant rings of Sylow $p$-subgroups and rings of invariants of the group. Finally we study some change of rings problems.

\begin{lemma}\label{powers-cm}
Let $(\Oo, \m)$ be a local ring and let $R = \bigoplus_{n \geq 0}R_n$ be a graded Noetherian ring with $R_0 = \Oo$. Assume that $R^{<c>}$ is standard graded. Let $m$ be a positive integer and assume that $R^{<cml>}$ is \CM \ for all $l \gg 0$. Then $R^{<cl>}$ is \CM \ for all $l \gg 0$.
\end{lemma}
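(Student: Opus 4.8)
The role of $m$ in the statement turns out to be inessential: I will show that for the standard-graded ring $S := R^{<c>}$, the set $\{ d \geq 1 : S^{<d>} \text{ is } \CM\}$ is either empty or cofinite (i.e.\ contains all $d \gg 0$). Granting this, the Lemma is immediate: by hypothesis $S^{<ml>} = R^{<cml>}$ is \CM\ for all $l \gg 0$, so the set is nonempty, hence cofinite, hence contains all $l \gg 0$; that is, $R^{<cl>} = S^{<l>}$ is \CM\ for all $l \gg 0$.

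So I reduce at once to the case $R = S$ standard graded over $\mathcal O$ — legitimate since $R^{<c>}$ is standard graded, $(R^{<c>})^{<ml>} = R^{<cml>}$ and $(R^{<c>})^{<l>} = R^{<cl>}$. Let $\M = \m S + S_+$ be the unique graded maximal ideal of $S$, and put $n = \dim S$, so that $\dim S^{<d>} = n$ for every $d \geq 1$ (as $S$ is module-finite over $S^{<d>}$). Being a Noetherian $\mathbb{N}$-graded ring with local degree-zero part, $S^{<d>}$ is \CM\ iff $H^i_{\M^{<d>}}(S^{<d>}) = 0$ for all $i < n$; and Veronese commutes with local cohomology at the graded maximal ideal, $H^i_{\M^{<d>}}(S^{<d>}) \cong \bigoplus_{k \in \Z} H^i_{\M}(S)_{dk}$ (use $\sqrt{\M^{<d>}S} = \M$ together with the decomposition of $S$ into $S^{<d>}$-submodules by residue classes of degrees mod $d$). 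Setting $T_i := \{ j \in \Z : H^i_{\M}(S)_j \neq 0 \}$, we conclude: $S^{<d>}$ is \CM\ $\iff$ $d\Z \cap T_i = \emptyset$ for every $i < n$.

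The crux is the structure of the sets $T_i$: I claim each $T_i$ is either finite or contains a half-line $(-\infty, N_i]$. To prove this I pass to the completion of $\mathcal O$ (faithfully flat, so it changes neither the $T_i$ nor the \CM\ property in question), so that $\mathcal O$, and hence $S$, has a graded dualizing complex; then graded local duality presents the graded Matlis dual $H^i_{\M}(S)^\vee$ as a \emph{finitely generated} graded $S$-module $N_i$, with $(N_i)_j \neq 0 \iff H^i_{\M}(S)_{-j} \neq 0$. Since $S$ is standard graded, in degrees above the generators of a finitely generated graded $S$-module one has $N_j = S_1 N_{j-1}$; hence once such a module vanishes in a single large degree it vanishes in all larger degrees, so its set of nonvanishing degrees is either finite or contains some $[c, \infty)$. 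Reflecting degrees yields the claim for $T_i$. I expect this to be the main obstacle: without an input of this kind — ruling out that $H^i_{\M}(S)$ be ``spread thinly with gaps'' in the negative degrees — the Lemma would in fact be false (one can arrange hypothetical support patterns for which CM-ness of $S^{<ml>}$ for $l \gg 0$ does not imply CM-ness of $S^{<l>}$ for $l \gg 0$).

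Granting the claim, suppose $S^{<d_0>}$ is \CM\ for some $d_0 \geq 1$. Then $d_0\Z \cap T_i = \emptyset$ for all $i < n$; since $0 \in d_0\Z$ this forces $0 \notin T_i$, and since $d_0\Z$ meets every half-line $(-\infty, N]$, the claim forces each such $T_i$ to be finite. Thus $\bigcup_{i < n} T_i$ is a finite set of nonzero integers, and for every $d$ exceeding all their absolute values we have $d\Z \cap T_i = \emptyset$ for all $i < n$, i.e.\ $S^{<d>}$ is \CM. Hence $\{ d : S^{<d>} \text{ is } \CM \}$ is empty or cofinite, which by the opening remark completes the proof.
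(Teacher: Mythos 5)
Your proof is correct and follows essentially the same route as the paper's: complete $\Oo$, pass to the (finitely generated) graded Matlis duals of the local cohomology modules at the graded maximal ideal, and use that over the standard graded Veronese a finitely generated module satisfying $N_j = S_1 N_{j-1}$ in large degrees cannot vanish along an arithmetic progression of large degrees without vanishing in all large degrees. Your repackaging of this as the dichotomy ``the set of $d$ with $S^{<d>}$ \CM\ is empty or cofinite'' is a slightly more explicit formulation, but the key inputs (completion, finite generation of the duals, standard gradedness) coincide with the paper's argument.
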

\begin{proof}
We may assume that $\Oo$ is complete. Let $\M$ be the graded maximal ideal of $R$. Then
$\M^{<l>}$ is the graded maximal ideal of $R^{<l>}$. Local cohomology behaves well with respect to the Veronese functor. In particular for all $i \geq 0$ we have
\[
\left(H^i_\M(R)\right)^{<l>} = H^i_{\M^{<l>}}(R^{<l>}).
\]
For $i \geq 0$ let $E_i = H^i_\M(R)^\vee$ be the Matlis dual of $H^i_\M(R)$. Note that
$E_i$ is a finitely generated $R$-module. We note that as $R^{<c>}$ is standard graded we have $(E_i)_{cml} = (E_i^{<c>})_{ml} = 0$ for all $i \gg 0$ if and only if $(E_i)_{cl} = 0$ for all $l \gg 0$. It follows that
$\left(H^i_\M(R)\right)^{<cml>} = 0 $ for $l \gg 0$ if and only if $\left(H^i_\M(R)\right)^{<cl>} = 0 $ for $l \gg 0$. The result follows.
\end{proof}

  The following result is essential in our proofs.
 \begin{theorem}\label{sylow}
 Let $(\Oo, (\pi))$ be a DVR with residue field of characteristic $p > 0$. Let $G$ be a finite subgroup of $GL_n(\Oo)$ acting linearly on $R = \Oo[X_1, \ldots, X_d]$ (fixing $\Oo$). Choose a Sylow $p$-subgroup $H_p$ of $G$. Then
 \begin{enumerate}[\rm (1)]
 \item
  If $(R^{H_p})^{<l>}$ is \CM\  then $(R^G)^{<l>}$ is \CM.
  \item
  Assume $(R^{H_p})^{<c>}$ and  $(R^G)^{<t>}$ are standard graded. If $(R^{H_p})^{<cl>}$ is \CM \ for $l \gg 0$ then $(R^G)^{<tl>}$ is \CM \ for $l \gg 0$.
 \end{enumerate}
\end{theorem}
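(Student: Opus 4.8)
The standard tool here is the transfer (Reynolds-type) argument, exploiting the fact that $[G : H_p]$ is a unit in $\Oo$. Since $H_p$ is a Sylow $p$-subgroup of $G$, the index $n := [G : H_p]$ is coprime to $p$, hence invertible in the DVR $\Oo$ (whose maximal ideal has residue characteristic $p$). I would first observe that $R^G \subseteq R^{H_p}$ is a module-finite extension: indeed $R^{H_p}$ is a finite $R^G$-module because $R$ is a finite $R^G$-module (standard finiteness of invariants over the Noetherian ring $\Oo$). The key point is that $R^G$ is in fact a direct summand of $R^{H_p}$ as an $R^G$-module. Define the relative transfer $\tau \colon R^{H_p} \to R^G$ by $\tau(x) = \frac{1}{n}\sum_{\sigma \in G/H_p} \sigma(x)$, where the sum is over a set of coset representatives; since $x$ is $H_p$-invariant this is well-defined, and $\tau$ is an $R^G$-linear retraction of the inclusion $R^G \hookrightarrow R^{H_p}$. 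Hence $R^{H_p} = R^G \oplus \ker\tau$ as $R^G$-modules.

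Next I would pass to the Veronese level. The Veronese functor $(-)^{<l>}$ is exact and compatible with taking invariants, so $\tau$ restricts to an $(R^G)^{<l>}$-linear splitting of $(R^G)^{<l>} \hookrightarrow (R^{H_p})^{<l>}$, making $(R^G)^{<l>}$ a direct summand of $(R^{H_p})^{<l>}$ as $(R^G)^{<l>}$-modules. For part (1): the two rings $(R^G)^{<l>}$ and $(R^{H_p})^{<l>}$ have the same dimension (they share a common fraction field up to a finite extension, or more directly $(R^{H_p})^{<l>}$ is module-finite over $(R^G)^{<l>}$), and a direct summand of a \CM\ module over itself is \CM: concretely, $(R^{H_p})^{<l>}$ being \CM\ as a ring means it is a \CM\ $(R^G)^{<l>}$-module of full dimension, and $(R^G)^{<l>}$ is a direct summand of it, so $(R^G)^{<l>}$ is itself a \CM\ $(R^G)^{<l>}$-module, i.e.\ a \CM\ ring. (Equivalently, in terms of local cohomology at the graded maximal ideal $\M$: $H^i_\M\big((R^{H_p})^{<l>}\big) = H^i_\M\big((R^G)^{<l>}\big) \oplus H^i_\M(\text{summand})$, so vanishing below the top dimension is inherited.)

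For part (2), the asymptotic statement, I would combine part (1) with Lemma \ref{powers-cm}. First, using that $(R^{H_p})^{<c>}$ and $(R^G)^{<t>}$ are standard graded, I would replace $c$ and $t$ by a common multiple --- say work with the Veronese at level $e = \operatorname{lcm}(c,t)$, noting that a Veronese of a standard graded ring is standard graded, so both $(R^{H_p})^{<e>}$ and $(R^G)^{<e>}$ are standard graded. The hypothesis that $(R^{H_p})^{<cl>}$ is \CM\ for $l \gg 0$ gives, via Lemma \ref{powers-cm} applied to the graded ring $R^{H_p}$ (with its parameter $c$ and multiplier $e/c$), that $(R^{H_p})^{<el>}$ is \CM\ for all $l \gg 0$. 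Then part (1) applied at each such level $el$ yields that $(R^G)^{<el>}$ is \CM\ for all $l \gg 0$. Finally, Lemma \ref{powers-cm} applied to $R^G$ --- with its standard-graded Veronese at level $t$ and multiplier $e/t$ --- converts "$(R^G)^{<el>}$ \CM\ for $l \gg 0$" back into "$(R^G)^{<tl>}$ \CM\ for $l \gg 0$", which is the claim.

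**Main obstacle.** The conceptually delicate point is not the transfer splitting itself (routine once $n \in \Oo^\times$) but making sure the summand/dimension bookkeeping is airtight: one must know that $(R^{H_p})^{<l>}$ is a finitely generated $(R^G)^{<l>}$-module of the same Krull dimension, so that "\CM\ ring" and "maximal \CM\ module over $(R^G)^{<l>}$" coincide and the summand inherits \CM ness. This finiteness of $R^{H_p}$ over $R^G$ over the mixed setting (a DVR base rather than a field) is where one should be slightly careful, but it follows from Noetherianity and the module-finiteness of $R$ over $R^G$. The Veronese juggling in part (2) is then purely formal given Lemma \ref{powers-cm}.
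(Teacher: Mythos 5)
Your proposal is correct and follows essentially the same route as the paper: the transfer $\tau$ (the paper's $\psi^G_{H_p}$, using that $[G:H_p]$ is a unit in $\Oo$) makes $R^G$ a graded $R^G$-module direct summand of $R^{H_p}$, so CM-ness is inherited degreewise via local cohomology/depth, and part (2) is then the same Veronese bookkeeping with Lemma \ref{powers-cm} (the paper uses the level $ct$ where you use $\operatorname{lcm}(c,t)$, an immaterial difference; your first invocation of the lemma, passing from level $cl$ up to level $el$, is actually just a subsequence observation and needs no lemma).
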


 \s\label{transfer}
 Let $(\Oo, \pi)$ be a DVR such that $\Oo/(\pi)$ is a field of characteristic $p$. Let $G$ be a finite subgroup of $GL_n(\Oo)$ acting linearly on $R = \Oo[X_1, \ldots, X_n]$ (fixing $\Oo$). Let $H$ be a $p$-Sylow subgroup of $G$. Define
 \begin{align*}
  \psi^G_H &\colon R^H \rt R^G, \\
  r  &\mapsto \frac{1}{|G \colon H|}\sum_{gH \in G/H} gr.
 \end{align*}
See \cite[2.4]{S} which shows that the above definition  is independent of the choice of elements representing the cosets $G/H$. Furthermore note that $p$ does not divide $|G \colon H|$ and so it is a unit in $\Oo$. The following properties of $\psi^G_H$ are established in \cite[2.4]{S}:
\begin{enumerate}
 \item $\psi$ is $R^G$-linear.
 \item $\psi^G_H$ is a splitting of the inclusion $R^G \hookrightarrow R^H$.
\end{enumerate}

 \begin{proof}[Proof of Theorem \ref{sylow}]
 Set $H = H_p$.
  Let $\M_G$ be the $*$-maximal ideal of $R^G$. Notice as $R^H$ is a finite $R^G$-module we get $\sqrt{\M_G R_H} = \M_H$ the $*$-maximal ideal of $R^H$.  We have a sequence of $R^G$-linear maps
  $$ R^G   \hookrightarrow R^H \xrightarrow{\psi^G_H} R^G $$
  such that the composite is identity. As local-cohomology is $R^G$-linear we get
  a sequence of graded $R^G$-linear maps for all $i$:
  $$ H^i_{\M_G}(R^G)  \rightarrow H^i_{\M_G}(R^H) \xrightarrow{H^i(\psi^G_H)} H^i_{\M_G}(R^G) $$
  such that the composite is identity. It follows that
  if $H^i_{\M_H}(R^H)_n = 0$ then
  $H^i_{\M_G}(R^G)_n = 0$. Note $\dim R^G  = \dim R^H = d + 1$

  (1) If $(R^H))^{<l>}$ is \CM \ then $H^i_{\M_H}(R^H)_{nl} = 0$ for all $n \in \Z$ and for $i = 0, 1, \ldots, d$. By our argument $H^i_{\M_H}(R^G)_{nl} = 0$ for all $n \in \Z$ and for $i = 0, 1, \ldots, d$. It follows that $(R^G)^{<l>}$ is \CM.

  (2) We note that $(R^H)^{<ctl>}$ is \CM \ for $l \gg 0$. So by (1) we get $(R^{G})^{<ctl>}$ is \CM \ for $l \gg 0$. By \ref{powers-cm} it follows that $(R^{G})^{<tl>}$ is \CM \ for $l \gg 0$. The result follows.
 \end{proof}

Next we consider global version of Lemma \ref{powers-cm} in the case of our interest.
\begin{lemma}\label{powers-cm-global}
Let $A$ be ring of integers in a number field $K$  and let  \\ $R = A[X_1, \ldots, X_n]$.
Let $G$ be a finite subgroup of $GL_n(A)$ acting linearly on $R$ and let $S = R^G$.
Assume that $S^{<c>}$ is standard graded.
Let $m$ be a positive integer and assume that $S^{<cml>}$ is \CM \ for all $l \gg 0$. Then $S^{<cl>}$ is \CM \ for all $l \gg 0$.
\end{lemma}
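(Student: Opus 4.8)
The plan is to reduce the statement to the local Lemma~\ref{powers-cm} by localizing the base ring $A$. Since $A$ is a Dedekind domain and taking $G$-invariants commutes with the flat base changes $A\rt A_\p$, for each prime $\p$ of $A$ we get $S\otimes_A A_\p=(A_\p[X_1,\dots,X_n])^G=:S_\p$, a graded ring whose degree-zero part is the local ring $A_\p$, and $(S^{<cl>})\otimes_A A_\p=(S_\p)^{<cl>}$. Because the Cohen--Macaulay property is local on $\Spec$ and every prime of the Noetherian ring $S^{<cl>}$ contracts to $(0)$ or to a maximal ideal of $A$, we have: $S^{<cl>}$ is \CM\ if and only if $S^{<cl>}\otimes_A K$ is \CM\ and $(S_\p)^{<cl>}$ is \CM\ for every maximal ideal $\p$ of $A$. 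The whole issue is then to make the resulting threshold on $l$ uniform in $\p$.

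Two reductions cut the set of relevant primes down to a finite one. First, the generic fibre $S^{<cl>}\otimes_A K$ is the $cl$-th Veronese subring of $U=(K[X_1,\dots,X_n])^G$; as $\charp K=0$, the Hochster--Eagon theorem shows $U$ is \CM, hence so is every Veronese subring of $U$. Second, let $\p$ be a maximal ideal of $A$ with $\p\nmid|G|$ and put $(\Oo,(\pi))=(A_\p,\p A_\p)$; then $|G|$ is a unit in $\Oo$, and the Reynolds operator $r\mapsto\frac{1}{|G|}\sum_{g\in G}gr$ shows that $\pi$ is a nonzerodivisor on $S_\p$ and that $S_\p/\pi S_\p\cong(\Oo/(\pi))[X_1,\dots,X_n]^G$. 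The latter is \CM\ of dimension $n$ by the Hochster--Eagon theorem over the field $\Oo/(\pi)$ (where $|G|$ is still invertible), and $S_\p[\pi^{-1}]=U$ is \CM; since $S_\p$ is a finitely generated domain over $A_\p$ of dimension $n+1$, this forces $S_\p$ to be \CM, and therefore $(S_\p)^{<cl>}$ is \CM\ for every $l\geq 1$.

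Finally, for each of the finitely many primes $\p_1,\dots,\p_r$ of $A$ dividing $|G|$ I apply Lemma~\ref{powers-cm} to the graded ring $S_{\p_j}$ over the local ring $A_{\p_j}$: its $c$-th Veronese $(S^{<c>})_{\p_j}$ is standard graded because $S^{<c>}$ is, and $(S_{\p_j})^{<cml>}=(S^{<cml>})_{\p_j}$ is \CM\ for $l\gg 0$ because $S^{<cml>}$ is and localization preserves the \CM\ property; the lemma then yields an integer $l_j$ with $(S_{\p_j})^{<cl>}$ \CM\ for all $l\geq l_j$. Put $l_0=\max_j l_j$. For every $l\geq l_0$ the generic fibre and every special fibre of $S^{<cl>}$ are \CM, so $S^{<cl>}$ is \CM. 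The main obstacle is precisely this uniformity: the thresholds produced by Lemma~\ref{powers-cm} at the various $\p$ are a priori unrelated, and it is the fact that $S_\p$ is already \CM\ whenever $\p\nmid|G|$ that confines the ``bad'' primes to a finite set and makes the maximum meaningful.
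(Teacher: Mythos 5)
Your proof is correct and follows essentially the same route as the paper: localize at the finitely many primes of $A$ lying over primes dividing $|G|$, apply Lemma~\ref{powers-cm} at each of those (taking the maximum of the finitely many thresholds), and observe that at all other primes $S_Q$ is already \CM\ (order of $G$ invertible), so all its Veronese subrings are \CM. The extra detail you supply (Reynolds operator, generic fibre via Hochster--Eagon) is just an expansion of the paper's one-line assertion that $S_Q$ is \CM\ when $q\nmid|G|$.
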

\begin{proof}
  Let $P_1, \ldots, P_r$ be all the primes in $A$ such that $P\cap \Z = p\Z$ where $p$ divides $|G|$. Fix $P = P_i$ for some $i$. We note that $(S_P)^{<c>}$ is standard graded.

If $S^{<cml>}$ is \CM \ for all $l \gg 0$ then $(S_{P_i})^{<cml>}$ is \CM \ for all $l \gg 0$ for all $i$. By \ref{powers-cm} it follows that $(S_{P_i})^{<cl>}$ is \CM \ for all $l \gg 0$ for all $i$.  If $Q$ is a prime in $A$ such that $Q\cap \Z = qZ$ where $q$ does not divide $|G|$ then $S_Q$ is \CM. So $(S_Q)^{<l>}$ is \CM \ for all $l \geq 1$. It follows that for any prime $K$ in $A$
$(S_K)^{<cl>}$ is \CM \ for all $l \gg 0$. So  $S^{<cl>}$ is \CM \ for all $l \gg 0$.
\end{proof}

We need to deduce \CM \ property of invariant ring of a finite group by considering its Sylow $p$-subgroups.
We show
\begin{proposition}\label{cm-sylow}
  Let $A$ be ring of integers in a number field $K$  and let  \\ $R = A[X_1, \ldots, X_n]$.
Let $G$ be a finite subgroup of $GL_n(A)$ acting linearly on $R$ and let $S = R^G$. For each prime $p$ dividing $|G|$ fix a Sylow $p$-subroup $H_p$ of $G$.
Then
\begin{enumerate}[\rm (1)]
  \item Fix $l \geq 1$. If $(R^{H_{p}})^{<l>}$ is \CM \ for all primes $p$ dividing order of $G$ then $(R^G)^{<l>}$ is \CM.
  \item Assume $(R^G)^{<c>}$ is standard graded. Assume for each $H_p$ (with $p$ dividing $|G|$) we have $(R^{H_{p}})^{<d_p>}$ is standard graded and  $(R^{H_{p}})^{<d_pl>}$ is \CM \ for all $l \gg 0$. Then $(R^G)^{<cl>}$  \CM \ for all $l \gg 0$.
\end{enumerate}
\end{proposition}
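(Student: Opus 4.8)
The plan is to check the \CM\ property of $(R^G)^{<l>}$ (resp.\ of $(R^G)^{<cl>}$) prime by prime over $A$, thereby reducing both statements to the local Sylow reduction in Theorem \ref{sylow}. Three routine facts make this work. First, for a prime $P$ of $A$ the flat base change $A \to A_P$ commutes with forming invariants of the finite group $G$ (so $(R^G)_P = (R_P)^G$ and $(R^{H_p})_P = (R_P)^{H_p}$, where $R_P = A_P[X_1,\ldots,X_n]$) and also with the Veronese functor, so $\big((R^G)^{<l>}\big)_P = \big((R_P)^G\big)^{<l>}$ and likewise for $R^{H_p}$. Second, $A_P$ is a DVR (as $A$ is a Dedekind domain), $G$ remains a subgroup of $GL_n(A_P)$, $H_p$ remains a Sylow $p$-subgroup of $G$, and ``standard graded'' is preserved by this localization. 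Third, since $(R^G)^{<l>}$ is a finitely generated algebra over the Jacobson ring $A$, it is \CM\ if and only if $\big((R^G)^{<l>}\big)_P$ is \CM\ for every (maximal) prime $P$ of $A$; so it suffices to treat each $P$ in turn.

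Fix a prime $P$ of $A$ and put $\Oo = A_P$, a DVR; let $p$ be its residue characteristic. If $p \nmid |G|$, then $|G|$ is a unit in $\Oo$, the averaging operator $r \mapsto \frac{1}{|G|}\sum_{g \in G} gr$ realizes $(R_P)^G$ as an $(R_P)^G$-module direct summand of the \CM\ (indeed regular) ring $R_P$, hence $(R_P)^G$ is \CM, and then every Veronese subring $\big((R_P)^G\big)^{<l>}$ is \CM\ by the behaviour of local cohomology under the Veronese functor used in the proof of Lemma \ref{powers-cm}. If instead $p \mid |G|$, then $\Oo$ is a DVR of residue characteristic $p$ with $H_p$ a Sylow $p$-subgroup of $G$, so Theorem \ref{sylow} applies over $\Oo$. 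For (1): by hypothesis $\big((R_P)^{H_p}\big)^{<l>}$ is \CM, so Theorem \ref{sylow}(1) gives that $\big((R_P)^G\big)^{<l>} = \big((R^G)^{<l>}\big)_P$ is \CM; combined with the case $p\nmid |G|$ this holds for all primes $P$ of $A$, so $(R^G)^{<l>}$ is \CM. For (2): as in the proof of Lemma \ref{powers-cm-global}, only finitely many primes $P_1,\ldots,P_r$ of $A$ have residue characteristic dividing $|G|$; writing $p_i$ for the residue characteristic of $P_i$, the localized hypotheses say that $\big((R_{P_i})^{H_{p_i}}\big)^{<d_{p_i}>}$ and $\big((R_{P_i})^G\big)^{<c>}$ are standard graded and $\big((R_{P_i})^{H_{p_i}}\big)^{<d_{p_i}l>}$ is \CM\ for $l \gg 0$, so Theorem \ref{sylow}(2) (over $\Oo = A_{P_i}$, with the Veronese parameters $d_{p_i}$ and $c$ playing the roles of $c$ and $t$ there) shows $\big((R_{P_i})^G\big)^{<cl>}$ is \CM\ for $l\gg 0$; for every other prime $Q$ of $A$, $\big((R_Q)^G\big)^{<cl>}$ is \CM\ for all $l$ by the case $p\nmid|G|$. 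Taking $l_0$ to work simultaneously for the finitely many $P_i$, we get that $\big((R^G)^{<cl>}\big)_P$ is \CM\ for every prime $P$ of $A$ and every $l \geq l_0$, hence $(R^G)^{<cl>}$ is \CM\ for $l \gg 0$.

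The substance is entirely Theorem \ref{sylow}; the part that needs care is the passage between $A$ and its localizations $A_P$ --- checking that invariants and Veronese subrings commute with it, that the Sylow and standard-gradedness hypotheses survive it, and that the \CM\ property of a finitely generated $A$-algebra is detected after localizing at the primes of $A$. None of this is deep, but it is the only real work, since the proposition is essentially a reorganization of the local Theorem \ref{sylow} into a global statement.
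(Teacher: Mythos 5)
Your proof is correct and follows essentially the same route as the paper: localize at the primes of $A$, use the averaging operator to dispose of primes whose residue characteristic does not divide $|G|$, and reduce the remaining finitely many primes to Theorem \ref{sylow}. The only (cosmetic) difference is in part (2), where the paper passes to the common Veronese exponent $t = c\prod_p d_p$, applies part (1), and then uses Lemma \ref{powers-cm-global}, whereas you invoke Theorem \ref{sylow}(2) directly at each bad prime; both are equivalent rearrangements of the same ingredients, and your explicit checks (invariants and Veronese commute with localization, \CM\ is detected at localizations at primes of $A$, uniform choice of $l_0$ over the finitely many bad primes) are exactly the points the paper leaves implicit.
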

\begin{proof}
 (1) Let $Q_1, \ldots Q_s$ be all the primes in $A$ such that it contains primes in  $\Z$  dividing $|G|$. If $(R^{H_{p}})^{<l>}$ is \CM  \ then $(R^{H_{p}}_{Q_i})^{<l>}$ is \CM. By \ref{sylow} it follows that $(R^{G}_{Q_i})^{<l>}$ is \CM \ for all $i$. It follows that $(R^{G})^{<l>}$ is \CM.

 (2) Set $t = c\prod_{p}d_p$. Then note that $(R^{H_{p}})^{<tl>}$ is \CM \ for all $l \gg 0$. By (1), $(R^G)^{<tl>}$  \CM \ for all $l \gg 0$. By \ref{powers-cm-global} it follows that $(R^G)^{<cl>}$ is \CM \ for $l \gg 0$.
\end{proof}
We need to change rings to prove our result. The following result is crucial.
\begin{theorem}
  \label{AB} Let $A, B$ be two ring of integers of number fields with $A \subseteq B$. Let $G \subseteq GL_n(A)$ be a finite group. Note  $G$ can be considered as a subgroup of $GL_n(B)$. Let
  $R = A[X_1, \ldots, X_n] \subseteq T = B[X_1, \ldots, X_n]$. Let $G$ act on $R$ and $T$ (fixing $A$ and $B$ respectively). Then
  \begin{enumerate}[\rm (1)]
    \item  Fix $l \geq 1$. Then $(R^{G})^{<l>}$ is \CM \ if and only if $(T^{G})^{<l>}$ is \CM.
    \item Assume $R^{<c>}$ and $T^{<d>}$ is standard graded. Then $(R^G)^{<cl>}$ is \\ \CM \ for $l \gg 0$ if and only if $(T^G)^{<dl>}$ is \CM \ for all $l \gg 0$.
  \end{enumerate}
\end{theorem}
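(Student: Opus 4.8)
The plan is to realise $T^G$ as a faithfully flat base change of $S:=R^G$ along $A\rt B$, and then to descend the \CM\ property along flat local homomorphisms whose closed fibres are zero‑dimensional. First I would record the relevant facts about $A\subseteq B$: since $B$ is the integral closure of $A$ in a finite separable extension of $K$ it is module‑finite over $A$; being a torsion‑free module over the Dedekind domain $A$ it is $A$‑flat; and $\Spec B\rt \Spec A$ is surjective because $A\subseteq B$ is integral, so $A\rt B$ is faithfully flat and finite. Now $T=B\otimes_A R$ as graded $A$‑algebras with $G$ acting through its action on $R$, and $S=R^G$ is the kernel of the $A$‑linear map $R\rt \bigoplus_{g\in G}R$, $r\mapsto (gr-r)_g$; this kernel is preserved by the flat functor $B\otimes_A-$, so $T^G=B\otimes_A S$, whence $(T^G)^{<l>}=B\otimes_A S^{<l>}$ for every $l\ge 1$. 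Note that $S^{<l>}$ and $(T^G)^{<l>}$ are Noetherian (finitely generated over $A$, resp.\ $B$), and that $S^{<l>}\rt B\otimes_A S^{<l>}$ is again faithfully flat and finite.

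The heart of the argument is the following descent statement, applied with $U=S^{<l>}$: for a finitely generated $A$‑algebra $U$, the ring $U$ is \CM\ if and only if $B\otimes_A U$ is \CM. To prove it, recall that a Noetherian ring is \CM\ iff its localisations at maximal ideals are \CM, and that a localisation of a \CM\ ring at a prime is \CM. Given a prime $\mathfrak{Q}$ of $B\otimes_A U$ lying over $\mathfrak{P}$ of $U$, with $\mathfrak{p}=\mathfrak{P}\cap A$, consider the flat local homomorphism $U_{\mathfrak{P}}\rt (B\otimes_A U)_{\mathfrak{Q}}$. Its closed fibre is a localisation of $B\otimes_A\kappa(\mathfrak{P})\cong (B\otimes_A\kappa(\mathfrak{p}))\otimes_{\kappa(\mathfrak{p})}\kappa(\mathfrak{P})$, where $\kappa(\mathfrak{p})$ is the residue field of $A$ at $\mathfrak{p}$; since $B$ is module‑finite over $A$, $B\otimes_A\kappa(\mathfrak{p})$ is a finite‑dimensional $\kappa(\mathfrak{p})$‑algebra, hence Artinian, and this is inherited by the base field extension $\kappa(\mathfrak{p})\hookrightarrow\kappa(\mathfrak{P})$, so the closed fibre is Artinian, in particular \CM\ of dimension $0$. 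The standard dimension and depth formulas for a flat local homomorphism then give $\dim (B\otimes_A U)_{\mathfrak{Q}}=\dim U_{\mathfrak{P}}$ and $\depth (B\otimes_A U)_{\mathfrak{Q}}=\depth U_{\mathfrak{P}}$, so $U_{\mathfrak{P}}$ is \CM\ iff $(B\otimes_A U)_{\mathfrak{Q}}$ is. Using faithful flatness (so $\Spec(B\otimes_A U)\rt\Spec U$ is surjective) one now propagates the equivalence in both directions: if $B\otimes_A U$ is \CM, then for each maximal ideal $\mathfrak{P}$ of $U$ pick $\mathfrak{Q}$ above it, so $(B\otimes_A U)_{\mathfrak{Q}}$, and hence $U_{\mathfrak{P}}$, is \CM; conversely if $U$ is \CM, then for each maximal $\mathfrak{Q}$ of $B\otimes_A U$, setting $\mathfrak{P}=\mathfrak{Q}\cap U$, $U_{\mathfrak{P}}$, and hence $(B\otimes_A U)_{\mathfrak{Q}}$, is \CM. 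Taking $U=S^{<l>}$ gives part (1).

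For part (2): if $(R^G)^{<cl>}$ is \CM\ for all $l\gg 0$, then a fortiori $(R^G)^{<cdl>}$ is \CM\ for all $l\gg 0$, so by part (1) $(T^G)^{<cdl>}=B\otimes_A (R^G)^{<cdl>}$ is \CM\ for all $l\gg 0$; since $(T^G)^{<d>}$ is standard graded (it is the base change $B\otimes_A(R^G)^{<d>}$, so inherits standard‑gradedness), Lemma~\ref{powers-cm-global} upgrades this to: $(T^G)^{<dl>}$ is \CM\ for all $l\gg 0$. The converse is symmetric, exchanging the roles of $(R,c)$ and $(T,d)$. I do not foresee a serious obstacle: the whole argument is a bookkeeping of flat base change, and the only two points deserving explicit justification are the identity $T^G=B\otimes_A S$ (which rests on flatness of $B$ over $A$) and the fact that the fibres of $\Spec B\rt\Spec A$ remain zero‑dimensional after an arbitrary extension of the residue field (which rests on module‑finiteness of $B$ over $A$); both are automatic for rings of integers of number fields.
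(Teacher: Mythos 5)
Your proposal is correct and follows essentially the same route as the paper: the key identity $T^G = B\otimes_A R^G$ (the paper uses local freeness of $B_{P}$ over $A_{P}$, you use flatness of $B$ over $A$), transfer of the \CM\ property along a finite flat map with zero-dimensional fibers for part (1), and Lemma~\ref{powers-cm-global} applied to the common Veronese $\langle cdl\rangle$ for part (2). The only small slip is the parenthetical claim that $(T^G)^{<d>}$ inherits standard-gradedness from $(R^G)^{<d>}$ --- that is not known; but it is harmless since standard-gradedness of $(T^G)^{<d>}$ is part of the hypothesis.
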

\begin{proof}
  Let $\{ P_1, \ldots. P_s\}$ be \emph{all} the primes in $A$ lying over some prime in $\Z$ dividing the order of $G$. For a fixed $P_i$, let $Q_{ij}$ be the primes in $B$ lying over $P_i$  for $j =1, \ldots, r_i$. We note that $B_{P_i}$ is a free $A_{P_i}$ module. It follows that $((T^G)_{P_i})$ is a free $((R^G)_{P_i})$-module generated in degree zero. So  $((T^G)_{P_i})^{<l>}$ is a free $((R^G)_{P_i})^{<l>}$-module.
  We note that for a fixed $Q_{ij}$ the graded ring $((T^G)_{Q_{ij}})^{<l>}$ is a flat $((R^G)_{P_i})^{<l>}$ algebra with zero dimensional fiber.

  (1) Notice $(R^{G})^{<l>}$ is \CM \ if and only if $((R^G)_{P_i})^{<l>}$  is \CM \ for all $i$. We also have $(T^{G})^{<l>}$ is \CM \ if and only if $((T^G)_{Q_{ij}})^{<l>}$  is \CM \ for all $ij$. As the graded ring $((T^G)_{Q_{ij}})^{<l>}$ is a flat $((R^G)_{P_i})^{<l>}$ algebra with zero dimensional fiber we have that  $((R^G)_{P_i})^{<l>}$  is \CM \ for all $i$ if and only if   $((T^G)_{Q_{ij}})^{<l>}$  is \CM \ for all $ij$. The result follows.

  (2) We have by (1),  $(R^{G})^{<cdl>}$ is \CM \  for all $l \gg 0$ if and only if $(T^{G})^{<cdl>}$ is \CM \ for all $l \gg 0$.
  We also have by \ref{powers-cm-global},  $(R^{G})^{<cdl>}$ is \CM \  for all $l \gg 0$ if and only if $(R^{G})^{<cl>}$ is \CM \  for all $l \gg 0$.
  Similarly we have $(T^{G})^{<cdl>}$ is \CM \  for all $l \gg 0$ if and only if $(T^{G})^{<dl>}$ is \CM \  for all $l \gg 0$. The result follows.
\end{proof}

\section{Proof of local case of  Theorems \ref{main}, \ref{unramify}}
In this section we prove local case of  Theorem \ref{main} and Theorem \ref{unramify}. By \ref{cm-sylow} it suffices to prove the result for a Sylow $p$-subgroup of $G$. Thus we may assume $G$ is a $p$-group.
We first discuss the hypotheses on the DVR $\Oo$.

\s \label{setup-case2-O} $(\Oo,(\pi))$ is a DVR such that $\Oo_\pi$ has characteristic zero and $\Oo/(\pi)$ has characteristic $p >0$. We assume that $\Oo$ contains $p^{th}$ roots of unity.  We assume that if $\xi$ is any  primitive $p^{th}$-root of unity then $\xi-1 = \pi u_\xi$ where $u_\xi$ is a unit in $\Oo$.

\begin{remark}
  The hypotheses on $\Oo$ is satisfied in the cases that we essentially need. See \cite[7.3, 11.3]{P}.
\end{remark}
  We prove our result by induction on order of $|G|$. Although we are primarily interested in ring of invariants of a polynomial ring, the intermediate rings obtained during induction are no longer polynomial rings. So  we need to work in a more general class of rings to make the induction work.

\s \label{induct} Set $\F_\Oo$ to be class of graded Noetherian rings $R = \bigoplus_{n \geq 0} R_n$ such that
\begin{enumerate}
  \item $R_0 = \Oo$ and $\dim R = 4$.
  \item $R$ and $R/\pi R$ are domains
  \item $R_\pi$ is \CM \ and $H^3_{(R_\pi)_+}(R_\pi)_n = 0$ for $n \geq 0$.
  \item $H^2_{R_+}(R)_j = 0$ for $j \leq 0$ and for all $j \gg 0$
  \item   $H^3_{R_+}(R)_{0} = 0$ and there exists $m$ (depending on $R$) such that $H^3_{R_+}(R)_{jm}$ is free $\Oo$-module for all $j \ll 0$.
\end{enumerate}
We note that $R = \Oo[X, Y, Z] \in \F_\Oo$.
 \s\label{action}\emph{Group actions:} Let $R\in \F_\Oo$. Let $Aut^*(R)$ be the set of automorphism $\sigma \colon R \rt R$ such that
\begin{enumerate}
  \item $\sigma(R_n) \subseteq R_n$ for all $n \geq 0$. Furthermore $\sigma$ on $R_0 = \Oo$ is  the identity,
  \item The map $R_n \rt R_n$ induced by $\sigma$ is $\Oo$-linear.
\end{enumerate}
We consider only finite subroups of $Aut^*(R)$. We also note that if $\sigma \in Aut^*(R)$ then $\sigma$ induces an automorphism on $R/\pi R$.

One of our main results is
\begin{theorem}\label{main-body}
(with hypotheses as above). Let $R \in \F_\Oo$ and let $G$ be a finite $p$-subgroup of $Aut^*(R)$. Then $R^G \in \F_\Oo$.
\end{theorem}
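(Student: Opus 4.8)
I would argue by induction on $|G|$. The case $|G|=1$ is trivial. If $|G|>1$, pick a normal subgroup $H$ of $G$ with $G/H$ cyclic of order $p$ (for instance the preimage of a hyperplane in $G/\Phi(G)$); since $H\trianglelefteq G$, the group $G/H$ acts on $R^H$ through $Aut^*(R^H)$ and $R^G=(R^H)^{G/H}$. When $H\neq 1$ we have $R^H\in\F_\Oo$ by induction, so in all cases it suffices to treat $G=\Z/p\Z=\langle\sigma\rangle$; set $S=R^G$. Conditions (1)--(3) of \ref{induct} for $S$ are quick: $S_0=\Oo$ and $R$ is module-finite over $S$ (it is a finitely generated $\Oo$-algebra, hence a finitely generated $S$-algebra and integral over $S$), so $S$ is Noetherian with $\dim S=\dim R=4$; $S\subseteq R$ is a domain, and $\pi R\cap S=\pi S$ (if $x=\pi r\in S$ with $r\in R$ then $\pi\sigma(r)=\pi r$, so $\sigma(r)=r$), whence $S/\pi S\hookrightarrow(R/\pi R)^G\subseteq R/\pi R$ is a nonzero domain; and since $\pi$ is $G$-fixed, $S_\pi=(R_\pi)^G$, so the Reynolds operator $\tfrac1p\sum_i\sigma^i$ splits $S_\pi\hookrightarrow R_\pi$ over $S_\pi$ and, being degree preserving, splits $H^i_{(S_\pi)_+}(S_\pi)\hookrightarrow H^i_{(S_\pi)_+}(R_\pi)=H^i_{(R_\pi)_+}(R_\pi)$, so $S_\pi$ is \CM\ and $H^3_{(S_\pi)_+}(S_\pi)_n=0$ for $n\ge0$.

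I would also record the remark, valid for $R$ and for $S$, that the top two conditions of $\F_\Oo$ force $\depth_{(\,\cdot\,)_+}(\,\cdot\,)\ge 2$: $H^0$ vanishes since the ring is a domain, and $H^1$ is $\pi$-power torsion (by \CM-ness of the $\pi$-localisation) while its $\pi$-torsion is a subquotient of $H^0_{(\,\cdot\,)_+}$ of the reduction mod $\pi$, which is $0$ because that reduction is a domain.

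The substance is conditions (4)--(5) for $S$, and the plan is a two-step reduction. First, $H^2_{S_+}(S)$ and $H^3_{S_+}(S)_n$ (for $n\ge 0$) are $\pi$-power torsion by (3), while $(0:_{H^i_{S_+}(S)}\pi)$ is a subquotient of $H^{i-1}_{S_+}(S/\pi S)$ via $0\to S\xrightarrow{\pi}S\to S/\pi S\to 0$; so the vanishing parts of (4) and (5) reduce to: $H^1_{S_+}(S/\pi S)_j=0$ for $j\le 0$ and $j\gg 0$, and $H^2_{S_+}(S/\pi S)_0=0$. Second, reading off $0\to R\xrightarrow{\pi}R\to R/\pi R\to 0$ together with (4), (5) and $H^1_{R_+}(R)=0$, a ring in $\F_\Oo$ forces $R/\pi R$ to be a three-dimensional graded domain over $\Oo/\pi$ with $H^1_{R_+}(R/\pi R)$ vanishing in degrees $\le 0$ and $\gg 0$ and $H^2_{R_+}(R/\pi R)$ vanishing in degree $0$ and in degrees $jm$ for $j\ll 0$. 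Thus what is needed is a \emph{characteristic-$p$} statement: the invariants of a $\Z/p$-action on such a ring again have these properties — after correcting for the discrepancy between $(R/\pi R)^G$ and $S/\pi S$, which is governed by $H^1(G,R)$ (annihilated by $p$) and supported on the fixed locus of $\sigma$, of dimension $\le 2$ since $\sigma-1$ is a non-zero nilpotent mod $\pi$. For a polynomial ring this is the Ellingsrud--Skjelbred depth formula ($\depth k[x,y,z]^{\Z/p}=3$); in general it must be proved by the same induction, i.e. by showing this ``characteristic-$p$ class'' is stable under $\Z/p$-invariants. Finally, for the freeness assertion in (5) I would compare $S$ with $R$ through the trace $\tau=1+\sigma+\cdots+\sigma^{p-1}$ and $\delta=\sigma-1$ via the short exact sequences of finitely generated graded $S$-modules
\begin{align*}
&0\to S\to R\xrightarrow{\ \delta\ }\delta R\to 0,\qquad 0\to\delta R\to R\to R/\delta R\to 0,\\
&0\to\widehat{H}^{-1}(G,R)\to R/\delta R\xrightarrow{\ \bar\tau\ }S\to\widehat{H}^{0}(G,R)\to 0,
\end{align*}
where $\widehat{H}^{0}(G,R)=S/\tau R$ and $\widehat{H}^{-1}(G,R)=\ker\tau/\delta R$ are killed by $|G|=p$, hence by $\pi^{p-1}$ (as $p=\pi^{p-1}\cdot\text{unit}$, using the hypothesis on $\Oo$) and have $\Oo$-finite graded pieces; chasing the long exact local-cohomology sequences and feeding in (4)--(5) for $R$ shows that on a suitable arithmetic progression of very negative degrees $H^3_{S_+}(S)$ is an extension of an $\Oo$-submodule of the free $\Oo$-module $H^3_{R_+}(R)$ by an $\Oo$-finite module killed by $\pi^{p-1}$, hence is $\Oo$-finite, and is $\Oo$-torsion-free once $H^2_{S_+}(S/\pi S)$ vanishes there — so it is $\Oo$-free.

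The principal obstacle is the characteristic-$p$ step: since a general $R\in\F_\Oo$ is not a polynomial ring, one cannot just invoke the known structure of $\Z/p$-invariants in dimension three, and one must isolate the precise cohomological package handed by $\F_\Oo$ to $R/\pi R$ and prove it is inherited by $S/\pi S$ under the $\Z/p$-action — which forces the induction on $|G|$ to be run simultaneously with the reduction modulo $\pi$, and requires controlling the discrepancy between $S/\pi S$ and $(R/\pi R)^G$ through the coinvariants $R/\delta R$ and the image of $\tau$. The subsidiary technical nuisance is keeping all $\pi$-torsion in the connecting homomorphisms bounded by a fixed power of $\pi$; this is the role of the hypothesis in \ref{setup-case2-O} that $\xi-1$ is a uniformiser for each primitive $p$-th root of unity $\xi$. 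I would organise the write-up as: (i) the easy conditions and the $\depth\ge 2$ remark; (ii) the picture over $\Oo_\pi$ and over $\Oo/\pi$; (iii) the characteristic-$p$ induction; (iv) gluing through the three short exact sequences above.
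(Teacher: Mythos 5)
Your treatment of conditions (1)--(3) and the reduction to $G=\Z/p\Z$ matches the paper, and your observation that (4) and the vanishing/freeness parts of (5) reduce, via $0\to S\xrightarrow{\pi}S\to S/\pi S\to 0$ and the $\pi$-power-torsion property coming from (3), to statements about $H^1_{S_+}(S/\pi S)$ and $H^2_{S_+}(S/\pi S)$ is sound as far as it goes. But the proof stops exactly where the theorem begins: the ``characteristic-$p$ statement'' you need --- that the relevant local-cohomology vanishing is inherited by $S/\pi S$ under the $\Z/p$-action --- is never proved; you yourself label it the principal obstacle and defer it to ``the same induction'', without specifying what class of characteristic-$p$ rings is being claimed stable or why. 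This is not a routine verification: $S/\pi S$ is not $(R/\pi R)^G$, and controlling the discrepancy is precisely where the group cohomology must enter quantitatively. The paper does this not by reducing mod $\pi$ wholesale but by running the two Ellingsrud--Skjelbred spectral sequences $H^p(G,H^q_{\aF}(R))\Rightarrow H^{p+q}_{\aF}(G,R)$ and $H^p_{\aF}(H^q(G,R))\Rightarrow H^{p+q}_{\aF}(G,R)$ for several supports $\aF$ ($S_+$, the $*$-maximal ideal $\n$, $T^G_+$, $U^G_+$), and the decisive inputs are degree-zero vanishing lemmas for the $E_2$-terms: $H^0_{S_+}(H^1(G,R))_0=0$ and $H^0_{S_+}(H^2(G,R))_0=0$ (Lemma \ref{h0g2}, whose proof is a trace computation using the hypothesis that $\xi-1$ is $\pi$ times a unit --- not merely the bound $p=\pi^{p-1}\cdot\mathrm{unit}$ you invoke), together with $H^2(G,W)=W^G/pW^G$ when the action is trivial mod $\pi$ (Lemma \ref{g2-trivial}). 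None of these appear in your plan, and without them your spectral-sequence-free chase has no way to kill the degree-zero obstruction in $H^2_{S_+}(S)_0$ or $H^3_{S_+}(S)_0$.

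The freeness assertion in (5) has a second gap of the same kind. The paper splits into the case where $G\to Aut(R/\pi R)$ is trivial (where your mod-$\pi$ picture does work, because $\depth H^1(G,T)\geq 2$ follows from Lemma \ref{g2-trivial} and one concludes $T^G$ is \CM) and the case where it is injective; in the injective case the crux is Claim 3 of the paper, that $H^1_{\n}(H^1(G,T))$ vanishes in the relevant negative degrees, proved by passing to a further Veronese, choosing $v\in T^G_1$ regular on $H^1(G,T)$, and comparing $H^2_{U^G_+}(U^G)_0$ with $H^3_{T^G_+}(T^G)_{-1}$ through yet another pair of spectral sequences. Your Tate-cohomology sequences only show that $H^3_{S_+}(S)$ in very negative degrees is an extension of a submodule of a free module by a module killed by a power of $\pi$; freeness requires that torsion to vanish, which is again the unproved point (equivalently, the vanishing of the image of $H^2_{S_+}(S/\pi S)$ there). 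So the proposal correctly maps the terrain and settles the easy conditions, but the substance of the theorem --- conditions (4) and (5), including the case division and the auxiliary vanishing lemmas that make the hypothesis on $\Oo$ do real work --- is missing.
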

We first show that the class of rings $\F_\Oo$ is indeed an appropriate class of rings for our purposes. We show
\begin{proposition}\label{acm}
Let $R \in \F_\Oo$ and assume that $R^{<r>}$ is standard graded. Then $R^{<lr>}$ is \CM \ for all $l \gg 0$.
\end{proposition}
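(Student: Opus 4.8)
The plan is to read off the Cohen-Macaulay property of the Veronese subrings $R^{<lr>}$ from the local cohomology of $R$ itself. Since local cohomology commutes with the Veronese functor, $\bigl(H^i_\M(R)\bigr)^{<l>}=H^i_{\M^{<l>}}(R^{<l>})$ (as recalled in the proof of Lemma~\ref{powers-cm}), and $\dim R^{<lr>}=\dim R=4$, the ring $R^{<lr>}$ is \CM\ exactly when $H^i_\M(R)_{n\cdot lr}=0$ for every $n\in\Z$ and every $i\in\{0,1,2,3\}$. The cases $i=0,1$ need no work: since $R$ and $R/\pi R$ are domains (condition~(2) in~\ref{induct}), $\pi$ together with any nonzero element of $R_+$ is a regular sequence lying in $\M$, so $\depth_\M R\ge 2$ and $H^0_\M(R)=H^1_\M(R)=0$. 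So the real content is $i=2,3$.

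Next I would identify $H^2_\M(R)$ and $H^3_\M(R)$ with $R_+$-local cohomology. Because $\Gamma_\M=\Gamma_{\pi R}\circ\Gamma_{R_+}$ and $\pi R$ is principal, there is the long exact sequence
\[
\cdots\to H^i_\M(R)\to H^i_{R_+}(R)\to H^i_{R_+}(R)_\pi\to H^{i+1}_\M(R)\to\cdots .
\]
By condition~(3) in~\ref{induct}, $R_\pi$ is \CM\ of dimension $3$, so $H^i_{R_+}(R)_\pi=H^i_{(R_\pi)_+}(R_\pi)=0$ for $i\le 2$. Feeding this into the sequence yields $H^2_\M(R)\cong H^2_{R_+}(R)$ and an isomorphism of $H^3_\M(R)$ with the kernel of the localisation map $H^3_{R_+}(R)\to H^3_{R_+}(R)_\pi$, i.e.\ with the submodule $\Gamma_{\pi R}\bigl(H^3_{R_+}(R)\bigr)$ of $\pi$-power torsion elements. (All isomorphisms here are of graded modules, torsion being computed degreewise since $\pi$ has degree $0$.)

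Now for $i=2$: by condition~(4) in~\ref{induct} the module $H^2_{R_+}(R)$ is concentrated in a finite range of strictly positive degrees, so since $\bigl(H^2_\M(R)\bigr)^{<d>}_n=H^2_{R_+}(R)_{nd}$ and $nd\le 0$ for $n\le 0$ while $nd\ge d$ for $n\ge 1$, we get $\bigl(H^2_\M(R)\bigr)^{<d>}=0$ for all $d$ larger than that range. For $i=3$ put $E=H^3_{R_+}(R)$; this is a graded $R$-module whose graded components are finitely generated over the DVR $\Oo$ and which vanishes in all sufficiently large degrees. Let $m_0$ be the positive integer supplied by condition~(5) in~\ref{induct}, so $E_0=0$ and $E_{jm_0}$ is $\Oo$-free for $j\ll 0$. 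Take $d=rm_0l$ with $l$ a large positive integer; then for $n\in\Z$: if $n>0$ then $nd\gg 0$, so $E_{nd}=0$; if $n=0$ then $E_0=0$; if $n<0$ then $nd=(nrl)m_0$ with $nrl\ll 0$, so $E_{nd}$ is $\Oo$-free, in particular $\pi$-torsion-free. In every case $H^3_\M(R)_{nd}=\Gamma_{\pi R}(E_{nd})=0$. Combined with the cases $i=0,1,2$, this shows $R^{<rm_0l>}$ is \CM\ for all $l\gg 0$.

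Finally I would remove the spurious factor $m_0$ by invoking Lemma~\ref{powers-cm} with $c=r$ (legitimate, as $R^{<r>}$ is standard graded by hypothesis) and $m=m_0$: since $R^{<rm_0l>}$ is \CM\ for all $l\gg 0$, that lemma gives that $R^{<rl>}$ is \CM\ for all $l\gg 0$, which is the assertion of the Proposition. The two steps I expect to need the most care are the derived-functor identity $R\Gamma_\M=R\Gamma_{\pi R}\circ R\Gamma_{R_+}$ in the graded category — valid because $\Gamma_{R_+}$ preserves graded injectives and $\pi R$ is principal, giving the long exact sequence above — and the concluding appeal to Lemma~\ref{powers-cm}, which is precisely the device needed to pass from control of $H^\bullet_\M(R)$ along the arithmetic progression of degrees divisible by $m_0$ to control in every sufficiently large degree of the required form.
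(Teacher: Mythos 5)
Your argument is correct and is essentially the paper's own proof: the same long exact sequence linking $H^i_{\M}(R)$, $H^i_{R_+}(R)$ and $H^i_{(R_\pi)_+}(R_\pi)$, the same use of conditions (3)--(5) of \ref{induct} (freeness over $\Oo$ forcing injectivity into the localization, hence vanishing of $H^3_{\M}$ in degree $0$ and in degrees $rm_0l$, $l\ll 0$), and the same final appeal to Lemma \ref{powers-cm} to drop the factor $m_0$. Only a cosmetic slip: for $\depth_\M R\ge 2$ you need $\pi$ together with an element of $R_+$ \emph{not lying in} $\pi R$ (which exists since $R/\pi R$ is a domain of positive dimension), not an arbitrary nonzero element of $R_+$.
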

\begin{proof}
It suffices to prove that $R^{<rml>}$ is \CM \ for $l \gg 0$, see \ref{powers-cm}. Let $\n =(\pi, R_+)$ be the $*$-maximal ideal of $R$. We have a long exact sequence in cohomology
$$ \cdots  \rt H^{i-1}_{(R_\pi)_+}(R_\pi) \rt H^i_\n(R) \rt H^i_{R_+}(R) \rt
 H^{i}_{(R_\pi)_+}(R_\pi) \rt \cdots. $$
 Clearly $\depth R \geq 2$. We note that as $R_\pi$ is \CM \ we have isomorphism
 $H^2_\n(R) \cong H^2_{R_+}(R)$. As Veronese functor commutes with local cohomology by our assumption $R^{<l>}$ will have depth three for all $l \gg 0$.
 We note that if $E$ is a free $\Oo$-module then the map $E \rt E_\pi$ is injective.
  As $H^2_{(R_\pi)_+}(R_\pi) = 0$ and as $H^3_{R_+}(R)_0 $ and
   $H^3_{R_+}(R)_{rml} $ is free for
   $l \ll 0$ it follows that $H^3_\n(R)_0 = 0$ and
    $H^3_{\n}(R)_{rml} = 0$ for $l \ll 0$. Furthermore  clearly $H^3_{\n}(R)_{rml} = 0$ for $l \gg 0$.
    As Veronese functor commutes with local cohomology it follows that $R^{<rmt>}$ is \CM \ for $l \gg 0$.
\end{proof}

\s If $S$ is a commutative ring and $G$ is a finite group then $S[G]$ denotes the group ring of $G$ with coefficients in $S$. If $E$ is an $S[G]$-module then we let $H^i(G, E)$ denote the $i^{th}$ group cohomology of $E$ with respect to $S[G]$.
We need the following:

\begin{lemma}
\label{h0g2}  Let $G$ be a cyclic group of order $p$. Let $R \in \F_\Oo$. Then
\begin{enumerate}[\rm (1)]
  \item $H^1(G, R)_0 = 0$.
  \item $H^0_{S_+}(H^1(G, R))_0 = 0.$
  \item $H^0_{S_+}(H^2(G,R))_0 = 0$.
\end{enumerate}
\end{lemma}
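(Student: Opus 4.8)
The plan is to use that, for the cyclic group $G=\langle\sigma\rangle$ of order $p$, the cohomology $H^\bullet(G,R)$ is the cohomology of the $2$-periodic complex $R\xrightarrow{\sigma-1}R\xrightarrow{N}R\xrightarrow{\sigma-1}\cdots$ with $N=1+\sigma+\cdots+\sigma^{p-1}$; here $S:=R^G$, every map is homogeneous of internal degree $0$, so all cohomology is graded. For (1): in internal degree $0$ this gives $H^1(G,R)_0=\ker(N|_{R_0})/(\sigma-1)(R_0)$, and by \ref{action} $\sigma$ acts as the identity on $R_0=\Oo$, so $(\sigma-1)(R_0)=0$ while $N|_{R_0}$ is multiplication by $p$, which is injective because $\Oo$ is a domain of characteristic $0$ (its localization $\Oo_\pi$ is a field of characteristic $0$). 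Hence $H^1(G,R)_0=0$; the same remark shows $H^1(G,R)$ is concentrated in degrees $\ge 1$. For (2): $H^0_{S_+}(M)$ is always a graded $S$-submodule of $M$, so $H^0_{S_+}(H^1(G,R))_0\subseteq H^1(G,R)_0=0$ by (1).

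For (3) I first identify $H^2(G,R)=S/N(R)$, where $N(R):=\{N(r):r\in R\}$ is an ideal of $S$ (it is additively closed and $N(sr)=sN(r)$ for $s\in S$); in internal degree $0$, using \ref{action} again, $H^2(G,R)_0=\Oo/N(R_0)=\Oo/p\Oo$. Next I note $\sqrt{N(R)}=\pi S$: the inclusion $S/\pi S\hookrightarrow R/\pi R$ (valid because $\pi R\cap S=\pi S$, $R$ being a domain) shows $S/\pi S$ is a domain, so $\pi S$ is a height-one prime; and after inverting $\pi$ the order $p$ becomes a unit, so $H^i(G,R_\pi)=0$ for $i>0$, i.e.\ $N(R)_\pi=S_\pi$, whence $S/N(R)$ is $\pi$-power torsion and $\pi^tS\subseteq N(R)$ for some $t$. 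The key reduction is then: if $H^0_{S_+}(H^2(G,R))_0\ne 0$, pick a nonzero element represented by $a\in\Oo\setminus p\Oo$ with $S_+^k a\subseteq N(R)$; writing $p=\pi^e u$ ($u$ a unit) and $f=v_\pi(a)<e$, we also have $\pi^{e-f}a\in p\Oo\subseteq N(R)$, so $\operatorname{ann}_S(\bar a)$ contains $(\pi^{e-f},S_+^k)$ and therefore has radical the $*$-maximal ideal $\M_S=(\pi,S_+)$; thus $S\bar a\cong S/\operatorname{ann}_S(\bar a)$ is a nonzero finite-length submodule of $S/N(R)$ meeting internal degree $0$. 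So it suffices to show $S/N(R)=H^2(G,R)$ has no finite-length submodule meeting degree $0$, equivalently $\M_S\notin\operatorname{Ass}_S(S/N(R))$, equivalently (by $\sqrt{N(R)}=\pi S$) $N(R)=\pi^aS$ with $a=v_{\pi S}(N(R))$.

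To finish I would feed this through the composite-functor spectral sequence recorded in Section~10. Since $R$ is module-finite over $S$, $\sqrt{S_+R}=\sqrt{R_+}$, so $\Gamma_{S_+}\circ(-)^G=(-)^G\circ\Gamma_{R_+}$ on $R[G]$-modules, giving two spectral sequences with a common abutment $\mathbb H^\bullet$:
$$E_2^{p,q}=H^p_{S_+}\!\bigl(H^q(G,R)\bigr)\ \Longrightarrow\ \mathbb H^{p+q},\qquad {}'E_2^{p,q}=H^p\!\bigl(G,H^q_{R_+}(R)\bigr)\ \Longrightarrow\ \mathbb H^{p+q}.$$
Using $\depth R\ge 2$ (proof of \ref{acm}), conditions \ref{induct}(4)–(5), and the vanishing of $H^4_{R_+}(R)_n$ for $n\ge 0$ (Hartshorne–Lichtenbaum, since $\dim R/R_+=\dim\Oo=1>0$ and \ref{induct}(2) supplies the needed irreducibility), the second spectral sequence gives $\mathbb H^n_0=0$ for all $n$, hence every $E^{p,q}_\infty$ vanishes in degree $0$. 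As $H^\bullet(G,R)$ is $2$-periodic for cyclic $G$, column $2$ of $E_2$ is $H^p_{S_+}(H^2(G,R))$; chasing the differentials out of $E_2^{0,2}$ — landing in $H^2_{S_+}(H^1(G,R))$ (controlled since $H^1(G,R)$ lives in degrees $\ge 1$) and in a subquotient of $H^3_{S_+}(S)=H^3_{S_+}(H^0(G,R))$ — then forces $H^0_{S_+}(H^2(G,R))_0=0$.

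The main obstacle is exactly this last step. Parts (1), (2) and the reduction of (3) to ``$\M_S\notin\operatorname{Ass}_S(S/N(R))$ in degree $0$'' are routine; the real work is controlling $H^3_{S_+}(S)$ and $H^2_{S_+}(H^1(G,R))$ in degree $0$, which for $S=R^G$ overlaps with the conclusions \ref{induct}(4)–(5) that Theorem \ref{main-body} is simultaneously proving, so the bookkeeping must be arranged to avoid circularity — e.g.\ one first transfers $H^3_{(R_\pi)_+}(R_\pi)_n=0$ ($n\ge 0$) to $H^3_{(S_\pi)_+}(S_\pi)_n=0$ ($n\ge 0$) via the Reynolds splitting of $R_\pi$ over $S_\pi$, then runs the spectral sequence. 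This is precisely why $\F_\Oo$ carries its specific vanishing/freeness conditions and why the hypothesis \ref{setup-case2-O} on $\Oo$ — that $\xi-1$ is a uniformizer times a unit for each primitive $p$-th root of unity $\xi$ — is built in, since it is what pins down how the norm $N$ interacts with the powers of $\pi$ entering these computations.
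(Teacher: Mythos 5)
Your parts (1) and (2) are correct and coincide with the paper's (one-line) argument: $G$ acts trivially on $R_0=\Oo$, which sits in a characteristic-zero DVR, so $H^1(G,R)_0=0$, and $H^0_{S_+}(-)$ is a graded submodule. Part (3), however, has a genuine gap, and you have located it yourself: all the weight is carried by the final spectral-sequence step, which you do not carry out and which cannot be carried out as sketched. Even granting that the abutment vanishes in internal degree $0$, to conclude $(E_2^{0,2})_0=H^0_{S_+}(H^2(G,R))_0=0$ you must kill the targets of the differentials $d_2\colon E_2^{0,2}\rt E_2^{2,1}$ and $d_3\colon E_3^{0,2}\rt E_3^{3,0}$ in degree $0$, i.e.\ you need $H^2_{S_+}(H^1(G,R))_0=0$ and $H^3_{S_+}(S)_0=0$. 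The latter is exactly condition (5) of the conclusion $S=R^G\in\F_\Oo$, which Theorem \ref{main-body} (Claim 2 in Case 2) deduces \emph{from} Lemma \ref{h0g2}(3); so this route is circular. The former is not ``controlled'' by $H^1(G,R)$ living in degrees $\geq 1$: $H^2_{S_+}$ of a module generated in positive degrees routinely has nonzero components in degree $0$ and below. Two further steps are shaky: the degree-$0$ vanishing of the abutment needs $H^4_{R_+}(R)_0=0$, which Hartshorne--Lichtenbaum yields only under analytic irreducibility at the relevant maximal ideal, not merely because $R$ and $R/\pi R$ are domains; and the claim $\sqrt{\Tr(R)}=\pi S$ (your $N(R)$) is false whenever $G\rt Aut(R/\pi R)$ is injective, since a faithful action on the domain $R/\pi R$ has nonzero transfer image, so $\Tr(R)\not\subseteq\pi S$ --- though your reduction to ``no finite-length graded submodule of $H^2(G,R)$ meets degree $0$'' does not actually need that claim.

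The paper avoids all of this with a short direct computation, which is the missing idea. Write $p=\pi^e$ up to a unit (this is where \ref{setup-case2-O} enters) and suppose the class of $\pi^r$, $r\leq e-1$, in $H^2(G,R)_0=\Oo/p\Oo$ is $S_+$-torsion. Since $S/\pi S$ is a domain of positive dimension one may choose a homogeneous $v\in S_+$ of some degree $c$ with $v\notin\pi S$ and $\pi^r v=\Tr(\alpha)$ for some $\alpha\in R_c$. Decompose $R_c$ using $K=\ker(\sigma-1)|_{R_c}=S_c$ and $I=\image(\sigma-1)|_{R_c}$: after inverting $\pi$ the action of $\sigma$ on $I_\pi$ is diagonalizable with eigenvalues nontrivial $p$-th roots of unity, so $\Tr$ annihilates that part, and hence $\Tr(\alpha)=p\,\alpha_K$ with $\alpha_K$ coming from the invariant part. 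Then $\pi^r v=\pi^e\alpha_K$ forces $v=\pi^{e-r}\alpha_K\in\pi S$, contradicting the choice of $v$. This argument uses no local cohomology of $S$ at all, which is precisely what allows the lemma to be fed into the spectral-sequence bookkeeping of Theorem \ref{main-body} without circularity; your proposal, by contrast, postpones the real content to a step that presupposes the theorem it is meant to serve.
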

\begin{proof}
(1) We note that $R_0 = \Oo$ and $G$ acts trivially on $\Oo$. The result follows.

(2) This follows from (1).

(3) Let $p = \pi^e$. Note $ H^2(G,R)_0  = \Oo/p \Oo$. Suppose if possible $t = [\pi^r ] \in H^0_{S_+}(H^2(G,R))_0$ for some $r \leq e - 1$. Let $v \in S_+$ such that $v t = 0$ and  the image of $v$ is non-zero in $S/\pi S$. We have $\pi^r v = \Tr(\alpha)$ for some $\alpha \in R$.
Let $\deg  v  = c$. We have a map
$$ 0 \rt K \rt R_c \xrightarrow{\sigma - 1} R_c. $$
Note $K$ has trivial action by $G$.
We have an exact sequence
$$ 0 \rt K \rt R_c \rt I \rt 0, \quad \text{where} \ I = \image(\sigma -1).$$
This sequence splits as $\Oo$-modules. Let $\alpha = \alpha_K + \beta$ along this decomposition. We have an injection $I \hookrightarrow I_\pi$. We note that the action of $G$ on $I_\pi$ is diagonizable. Let $\{ y_1, \ldots, y_s \}$ be an $\Oo_\pi$-basis of $I_\pi$
with $\sigma(y_i) = \xi_i y_i$ where $\xi$ is a $p^{th}$-root of unity. Note $\xi_i \neq 1$.  As $\Tr(y_i) = 0$ it follows that $\Tr(\beta) = 0$.
So
$$ \pi^r v = \Tr(\alpha) = \Tr(\alpha_K) = p \alpha_K = \pi^e \alpha_K. $$
It follows that $v = \pi^{e - r}\alpha_K \in \pi S$. This contradicts our assumption on $v$.
\end{proof}

We will also need the following result.
\begin{lemma}\label{g2-trivial}
Let $W$ be a free $\Oo$-module. Suppose $W$ is also a $\Oo[G]$-module where $G$ is cyclic of order $p$. Suppose the action of $\sigma$ on $W/\pi W$ is trivial. Then $H^2(G, W) = W^G/pW^G$.
\end{lemma}
\begin{proof}
By \cite[9.6]{P},  there exists a $\Oo$-basis ${w_1, \cdots ,w_s}$ of $W$
such that $\sigma(w_i) = \xi_i w_i$ for all i, where $\xi_i$ is a $p^{th}$-root of unity.
Consider the part of sequence used to compute Group cohomology
$$ W \xrightarrow{\Tr} W \xrightarrow{\sigma - 1} W  \cdots$$
Let $w \in W$ be in the kernel of $W \xrightarrow{\sigma - 1} W $. Then $w \in W^G$.
Let $v \in W$. Say
$$v = \sum_{i = 1}^{s}\alpha_i w_i \ \quad \text{where $\alpha_i \in \Oo$}.$$
Notice $\Tr$ is $\Oo$-linear. Also note that if $\sigma(w_i) = \xi_i w_i$ where $\xi_i \neq 1$ then $\Tr(w_i) = 0$. So we have
$$ \Tr(v) = \sum_{i \mid \xi_i = 1} p \alpha_i w_i   \in p W^G.$$
Furthermore it is clear that $p W^G \subseteq \image \Tr$. Thus $\image \Tr = pW^G$.
The result follows.
\end{proof}

\s In our proof of Theorem \ref{main-body} we will need to use Ellingsrud-Skjelbred spectral sequences. A careful description  of these sequences is in part one of this paper, see \cite[section 5]{P}.
We now give
\begin{proof}[Proof of Theorem \ref{main-body}]
By a standard argument we may reduce to the case when $G$ is cyclic of order $p$. Set $S = R^G$.

(1) and (2) Clearly $S_0 = \Oo$ and $\dim S = 4$. We have $S$ is a domain. We have an exact sequence
$0 \rt R \xrightarrow{\pi} R \rt R/\pi R \rt 0$. Taking invariants we get that $S/\pi S$ is a subring of $(R/\pi R)^G$ which is a domain. So $S/\pi S$ is a domain.

(3) We note that $(R_\pi)^G = S_\pi$  is  \CM \ and
$$(H^3_{(R_\pi)_+}(R_\pi))^G = H^3_{(S_\pi)_+}(S_\pi).$$
It follows that $H^3_{(S_\pi)_+}(S_\pi)_n = 0$ for $n \geq 0$.

(4)  We use the Ellingsrud-Skjelbred spectral sequences with $\A = S_+$.
We first consider the spectral sequence
\[
(\beta_1)\colon \quad \quad \mathcal{E}_2^{p,q} = H^p(G, H^q_{S_+}(R)) \Longrightarrow H^{p+q}_{S_+}(G,R).
\]
We note that $  \mathcal{E}_2^{p,q} = 0$ for $q = 0, 1$ and when $q = 2$ we have
 $(\mathcal{E}_2^{p,2})_n = 0 $ for $n \leq 0$ and $n \gg 0$. So we have
 $H^{r}_{S_+}(G,R) = 0$ for $r = 0, 1$ and $H^{2}_{S_+}(G,R)_n = 0$  for $n \leq  0$ and $n \gg 0$.

 Next we consider the spectral sequence
 \[
(\alpha_1)\colon \quad \quad  E_2^{p,q} = H^p_{S_+}(H^q(G, R)) \Longrightarrow
H^{p+q}_{S_+}(G,R).
\]
 We have an exact sequence
 $$ 0 \rt H^0_{S_+}(H^1(G, R))  \rt H^2_{S_+}(S) \rt   E_3^{2,0} = E_\infty^{2,0}.$$
 By \ref{h0g2}(2),  $H^0_{S_+}(H^1(G, R))_ 0 = 0$. Furthermore  $H^0_{S_+}(H^1(G, R))_n = 0$ for $n \gg 0$ and for all $n < 0$. We note that $ E_\infty^{2,0}$ is a sub-quotient of $H^{2}_{S_+}(G,R)$. We have
$ H^{2}_{S_+}(G,R)_n = 0$  for $n \leq  0$ and $n \gg 0$.  It follows that
$H^2_{S_+}(S)_n  = 0$ for $n \leq 0$ and for $n \gg 0$.

(5) We have to consider two cases.

Case 1. The natural map $G \rt Aut(R/\pi R)$ is trivial. \\
We take a Veronese $T= R^{<mr>} $ standard graded such that it is \CM. We note that $\pi H^1(G, T) = 0$, see \cite[9.5]{P}. We have an exact sequence
$0 \rt H^1(G, T) \rt T/\pi T \rt H^2(G, T)$. By \ref{g2-trivial} $H^2(G, T) = T^G/p T^G$. It has positive depth.
Therefore  $\depth H^1(G, T)  \geq 2$.   We have an exact sequence
\[
0 \rt T^G/\pi T^G \rt T/\pi T \rt H^1(G, T) \rt 0.
\]
It follows that $\depth T^G/\pi T^G \geq 3$. So $T^G$ is \CM.   We have an exact sequence
 $$ 0 \rt 0=H^2_{T^G_+}(T^G/\pi T^G) \rt H^3_{T^G_+}(T^G) \xrightarrow{\pi} H^3_{T^G_+}(T^G). $$
 It follows that $H^3_{T^G_+}(T^G)_n$ is a free $\Oo$-module for $n = 0$ and for $n = mrl$ with $l \ll 0$.
 As Veronese functor behaves well with local cohomology we have $H^3_{S_+}(S)_{mrj}$ is free $\Oo$-module for $j = 0$ and for $j \ll 0$. As  $H^3_{(S_\pi)_+}(S_\pi)_0 = 0$ it follows that $H^3_{S_+}(S)_{0} = 0$ Thus $S \in \F_\Oo$.

 Case 2. The natural map $G \rt Aut(R/\pi R)$ is injective. \\
 We take a Veronese $T= R^{<mr>} $ such that it is standard and it is \CM. We may also assume $S^{<mr>}$ is standard graded and by (4) we may assume it has depth three. We also assume that $H^3_{T^G_+}(T)_n$ is a free $\Oo$-module for all $n \leq 0$ and $H^3_{T^G_+}(T)_0 = 0$.
We may also assume $H^0_{T^G_+}(H^1(G, T)) = 0$.

 By an argument similar to \cite[8.2, \ Claim 2]{P} it follows that $\dim H^2(G, T) \leq 2$. Also as  $p H^1(G, R) = 0$ it  follows that $\dim H^1(G, T) \leq 3$.

 Claim-1: $H^1_\n(H^1(G, T))_0 = 0$. \\
  We use the Ellingsrud-Skjelbred spectral sequence with $\A = \n$ the $*$-maximal ideal of $T^G$.
We first consider the spectral sequence
\[
(\beta_2)\colon \quad \quad \mathcal{E}_2^{p,q} = H^p(G, H^q_{\n}(T)) \Longrightarrow H^{p+q}_{\n}(G,T).
\]
We note that $  \mathcal{E}_2^{p,q} = 0$ for $q = 0, 1, 2$ and when $q = 3 ,4,$ we have
 $(\mathcal{E}_2^{p,q})_0 = 0 $. So we have
 $H^{r}_{\n}(G,T) = 0$ for $r = 0, 1, 2$ and $H^{j}_{\n}(G,T)_0 = 0$  for $j = 3, 4$

  Next we consider the spectral sequence
 \[
(\alpha_2)\colon \quad \quad  E_2^{p,q} = H^p_{\n}(H^q(G, T)) \Longrightarrow
H^{p+q}_{\n}(G,T).
\]
Note that we have a map
 \[
 E^{1,3}_2 = H^1_\n(H^3(G, T)) \rt E^{3,2}_2  = H^3_\n(H^2(G, T)) = 0
 \]
 Also note that $E_3^{4,1}$ is a sub-quotient of  $H^4_\n(H^1(G, T)) = 0$ (as $\dim H^1(G, T) \leq 3$. Also note that $\dim T^G = 4$, so $H^i_\n(-) = 0$ for $i \geq 5$.
 It follows that $E^{1,3}_2 = E^{1,3}_3 = E^{1,3}_\infty$. We note that
 $(E^{1,3}_\infty)_0$ is a sub-quotient of  $H^{4}_{\n}(G,T)_0 = 0$. So $(E^{1,3}_\infty)_0 = 0$. As $G$ is cyclic we have $H^1(G, T) = H^3(G, T)$. So we have $H^1_\n(H^1(G, T))_0 = 0$.

Claim:2   $H^{3}_{T^G_+}(T^G)_0 = 0$. \\
 We use the Ellingsrud-Skjelbred spectral sequence with $\A = T^G_+$.
We first consider the spectral sequence
\[
(\beta_3)\colon \quad \quad \mathcal{E}_2^{p,q} = H^p(G, H^q_{T^G_+}(T)) \Longrightarrow H^{p+q}_{T^G_+}(G,T).
\]
We note that $H^q_{T^G_+}(T) = 0$ for $q = 0, 1, 2$. Also note that $H^q_{T^G_+}(T) = 0$ for $q > 3$. So the spectral sequence collapses.
It follows that
 $H^{r}_{T^G_+}(G, T) = 0$ for $r = 0, 1, 2$ and $H^{3}_{T^G_+}(G, T) = H^0(G, H^3_{T^G_+}(T))$. In particular we have that $H^{3}_{T^G_+}(G, T)_0 = 0$.

  Next we consider the spectral sequence
 \[
(\alpha_3)\colon \quad \quad  E_2^{p,q} = H^p_{T^G_+}(H^q(G, T)) \Longrightarrow
H^{p+q}_{T^G_+}(G,T).
\]
We note that as $pH^i(G, R) = 0$ we get that $H^i_{\n }(H^i(G, R)) = H^i_{T^G_+}(H^i(G, R))$.  We have sequence
$$ (E_2^{1,1})_0 =  H^1_{T^G_+}(H^1(G, T))_0 \rt (E_2^{3,0})_0 = H^3_{T^G_+}(T^G)_0. $$
As $(E_2^{1,1})_0  = 0$ we get $(E_2^{3,0})_0 = (E_3^{3,0})_0.$
We note that $$(E_2^{0, 2})_0 = H^0_{T^G_+}(H^2(G, T))_0 = 0, \quad \text{see \ref{h0g2}(3).}$$ So $(E_3^{0, 2})_0 = 0$.
It follows that $$H^3_{T^G_+}(T^G)_0 =  (E_3^{3,0})_0 = (E_\infty^{3,0})_0.$$
So $H^3_{T^G_+}(T^G)_0 = 0$  as it is a sub-quotient of $H^{3}_{T^G_+}(G, T)_0  = 0$.

Claim:3 $H^1_{T^G_+}(H^1(G, R))_{jm}  = H^1_\n(H^1(G, R))_{jm} = 0$ for $j \ll 0$. \\
The first equality above holds since $p H^1(G, R) = 0$ implies $$H^1_{T^G_+}(H^1(G, R))  = H^1_\n(H^1(G, R)).$$
 Suppose the assertion in Claim-3 is not true. Then  after taking a further Veronese we may assume that $H^1_\n(H^1(G, T)) = \bigoplus_{n \leq -1}V_n$ with $V_{-1} \neq 0$.

By considering the spectral sequence $(\alpha_3)$ we have a map
$$ 0 \rt (E_3^{1,1})_{-1}  \rt  (E_2^{1,1})_{-1} = H^1_{T^G_+}(H^1(G, T))_{-1}  \rt H^3_{T^G_+}(T^G)_{-1}.$$
We note that $(E_3^{1,1})_{-1} = (E_\infty^{1,1})_{-1}$. The latter module is zero as it is a sub-quotient of $(H^2(G, T))_{-1} = 0$.  It follows that $V_{-1}$ is a sub-module of $H^3_{T^G_+}(T^G)_{-1}$ and it is a torsion $\Oo$-module.

After passing through a further Veronese we may choose $v \in T^{G}_1$ such that image of $v$ in $T^G/\pi T^G$ is non-zero and that it is
$H^1(G, T)$-regular. We have an exact sequence $0 \rt T(-1) \xrightarrow{v}  T \rt U \rt 0$. We note that $H^2_{T^G_+}(U)_n$ is free $\Oo$-module for $n \leq 0$ (as it is an
 sub-module of $H^3_{T^G_+}(T)_{n -1}$).  We note that $U_0 = \Oo$. It follows that
 $H^1(G, U)_0 = 0$.

 sub-claim 1: $H^2(U^G)_0$ is a free $\Oo$-module.

 We use the Ellingsrud-Skjelbred spectral sequences with $\A = U^G_+$.
We first consider the spectral sequence
\[
(\beta_4)\colon \quad \quad \mathcal{E}_2^{p,q} = H^p(G, H^q_{U^G_+}(U)) \Longrightarrow H^{p+q}_{U^G_+}(G,U).
\]
We note that $H^q_{U^G_+}(U) = 0$ for $q = 0, 1$. Also note that $H^q_{U^G_+}(U) = 0$ for $q \geq 3$. So the spectral sequence collapses.
It follows that
 $H^{r}_{T^G_+}(G, U) = 0$ for $r = 0, 1$ and $H^{2}_{U^G_+}(G, U) = H^0(G, H^2_{U^G_+}(U))$. In particular we have that $H^{2}_{U^G_+}(G, U)_0$ is a free $\Oo$-module.

  Next we consider the spectral sequence
 \[
(\alpha_4)\colon \quad \quad  E_2^{p,q} = H^p_{U^G_+}(H^q(G, U)) \Longrightarrow
H^{p+q}_{U^G_+}(G,U).
\]
We note that $H^0_{U^G_+}(H^1(G, U))_0 \subseteq H^1(G, U)_0 = 0$. It follows that
\[
H^2_{U^G_+}(U^G)_0 = (E_2^{2,0})_0 = (E_3^{2,0})_0 = (E_\infty^{2,0})_0
\]
Therefore $H^2_{U^G_+}(U^G)_0$ is a sub-module of $H^{2}_{U^G_+}(G, U)_0$ (see \ref{ltc}) which is a free $\Oo$-module. So $H^2_{U^G_+}(U^G)_0$ is a free $\Oo$-module.

We have an exact sequence $0 \rt T(-1) \xrightarrow{v}  T \rt U \rt 0$. Taking invariants and noting that $v$ is $H^1(G, T)$-regular we get an exact sequence
$$ 0 \rt T^G(-1)  \xrightarrow{v} T^G \rt U^G \rt 0. $$
By taking local cohomology with respect to $T^G_+$ we obtain an exact sequence
$$ 0\rt H^2_{T^G_+}(U^G)_0  \rt H^3_{T^G_+}(T^G)_{-1} \rt H^3_{T^G_+}(T^G)_{0} = 0. $$
By sub-claim-1 it follows that $ H^3_{T^G_+}(T^G)_{-1}$ is free $\Oo$-module. This forces $V_{-1} = 0$. Thus our assumption that $H^1_\n(H^1(G, R))_{jm} \neq 0$ for $j \ll 0$ is wrong. So  $H^1_\n(H^1(G, R))_{jm} = 0$ for $j \ll 0$.

By taking an appropriate Veronese we may assume that $H^1_\n(H^1(G, T)) = 0$. It follows that $H^1_{S_+}(H^1(G, T)) = 0$. By considering the spectral sequence $(\alpha_3)$ we get that $H^3_{T^G_+}(T^G)_n $ is free $\Oo$-module for $n \leq 0$. It follows that $S = R^G \in \F_\Oo$.
\end{proof}

\section{Proof of Theorems \ref{main} and \ref{unramify}}

We first give
\begin{proof}[Proof of Theorem \ref{main}]
For each prime $p$ dividing $|G|$ fix a Sylow $p$-subgroup $H_p$ of $G$. By \ref{cm-sylow} it suffices to prove that if $(R^{H_p})^{<d_p>} $ is standard graded then $(R^{H_p})^{<d_pl>} $ is \CM \ for all $l \gg 0$. Thus it suffices to assume $G$ is a $p$-group. By our assumption $G$ has exponent $p$. Let $K$ be the composite of the quotient field of $A$ and the Hilbert class field of $\mathbb{Q}(e^{2\pi i/p})$. Let $B$ be the ring of integers of the Hilbert class field of $\mathbb{Q}(e^{2\pi i/p})$ and let $C$ be the ring of integers of $K$. By \ref{AB} it suffices to prove the required assertion for $C$. We note that $G$ is conjugate to a group $H$ with $H$ a subgroup of $Gl_3(B)$. Again by \ref{AB} it suffices to prove the corresponding statement for $B$.
So we may assume that $G$ is a $p$-group and $A$ is the ring of integers of  the Hilbert class field of $\mathbb{Q}(e^{2\pi i/p})$.

 Assume $(A[X, Y, Z]^G)^{<c>} $ is standard graded.
Let $P$ be a prime ideal in $A$. Set $\Oo = A_P$. Let characteristic of the residue field of $\Oo$ be $q$. If $q \neq p$ then $\Oo[X, Y, Z]^G$ is \CM \ (and hence all its Veronese subrings). The essential case is when $q = p$. Then note that $\Oo$ satisfies the assumption in \ref{setup-case2-O}, see \cite[7.3]{P}. We note that $(\Oo[X, Y, Z]^G)^{<c>} $ is standard graded. So by Theorem \ref{main-body}, it follows that $(\Oo[X, Y, Z]^G)^{<cn>}$  is \CM \ for all $n \gg 0$. There are only finitely many primes $P$ in $A$ lying above $p$. It follows that $(A[X, Y, Z]^G)^{<cn>} $ is \CM \ for all $n \gg 0$.
\end{proof}

Next we give
\begin{proof}[Proof of Theorem \ref{unramify}]
For each prime $p$ dividing $|G|$ fix a Sylow $p$-subgroup $H_p$ of $G$. By \ref{cm-sylow} it suffices to prove that if $(R^{H_p})^{<d_p>} $ is standard graded then $(R^{H_p})^{<d_pl>} $ is \CM \ for all $l \gg 0$. Thus it suffices to assume $G$ is a $p$-group. By our assumption $p$ is unramified in $A$. Let $K$ be quotient field of $A$ and let $\zeta$ be primitive $p^{th}$-root of unity. Let
$B$ be the ring of integers in $K(\zeta)$. By \ref{AB} it suffices to prove the corresponding statement for $B$.

 Assume $(B[X, Y, Z]^G)^{<c>} $ is standard graded.
Let $P$ be a prime ideal in $B$. Set $\Oo = B_P$.
Let characteristic of the residue field of $\Oo$ be $q$. If $q \neq p$ then $\Oo[X, Y, Z]^G$ is \CM \ (and hence all its Veronese subrings). The essential case is when $q = p$. Then note that $\Oo$ satisfies the assumption in \ref{setup-case2-O}, see \cite[11.3]{P}.
We note that $(\Oo[X, Y, Z]^G)^{<c>} $ is standard graded. So by Theorem \ref{main-body},  it follows that $(\Oo[X, Y, Z]^G)^{<cn>}$  is \CM \ for all $n \gg 0$. There are only finitely many primes $P$ in $B$ lying above $p$. It follows that $(B[X, Y, Z]^G)^{<cn>} $ is \CM \ for all $n \gg 0$.

\end{proof}
\section{Proof of Theorem \ref{g2}}
In this section we give a proof of Theorem \ref{g2} by induction on order of $G$. The intermediate rings obtained in induction are no longer polynomial rings. So we need to work with a larger
class of rings to make the induction work. Let $A$ be ring of integers of Hilbert class field of $\mathbb{Q}(e^{2i\pi/p})$ and let $P$ be a prime ideal in $A$ containing $p$. Set $(\Oo, \pi) = (A_P, PA_P)$.

\s \label{thm2-class}  By $\Hc_{\Oo}$ we denote a class of rings $R$ with following properties.
\begin{enumerate}
  \item $R  = \bigoplus_{n \geq 0}R_n$ is a graded \CM \ ring with $R_0 = \Oo$.
  \item $\dim R = 4$.
  \item $R$ and $R/\pi R$ are domains.
\end{enumerate}
Note $\Oo[X,Y, Z] \in \Hc_{\Oo}$.
We give
\begin{proof}[Proof of Theorem \ref{g2}]
We note that if $H$ is a cyclic group of order $p$ which is normal in $G$ then note that the induced action of $G/H$ on $R^H/\pi R^H$ is also trivial. Thus it suffices to prove by induction on order of $G$  that if $G$ is cyclic of order $p$ and $R \in \Hc_{\Oo}$ then $S = R^G \in \Hc_\Oo$.  It is clear that $\dim S = 4$ and $S_0 = \Oo$. Also it is easy to prove that both $S$ and $S/\pi S$ are domains.

By \cite[9.5]{P}, we have $\pi H^1(G, R) = 0$. So we have an exact sequence $0 \rt H^1(G, R) \rt R/\pi R \rt H^2(G, R)$. By \ref{g2-trivial}, we have $H^2(G, R) = S/p S$ which has positive depth. So $\depth H^1(G, R) \geq 2$. The exact sequence $0 \rt S/\pi S \rt R/\pi R \rt H^1(G, R) \rt 0$ yields $\depth S/\pi S \geq 3$. So $\depth S \geq 4$. Thus $S$ is \CM.
\end{proof}

\section{Proof of Theorem \ref{d2}}
In this section we give a proof of Theorem \ref{d2}. We prove it by induction on order of $G$. The intermediate rings that are obtained are no longer polynomial rings. So we need to work on a more general class of rings to make the induction go through.

\s \label{dim3} Let $(\Oo, (\pi))$ be a  DVR of mixed characteristic. By $\G_{\Oo}$ we denote a class of rings $R$ with following properties.
\begin{enumerate}
  \item $R  = \bigoplus_{n \geq 0}R_n$ is a graded Noetherian  ring with $R_0 = \Oo$.
  \item $\dim R = 3$.
  \item $R$ and $R/\pi R$ are domains.
  \item $R_\pi$ is \CM.
  \item $H^i_{R_+}(R) = 0$ for $i = 0, 1$.
  \item $H^2_{R_+}(R)_0 = 0$  and $H^2_{R_+}(R)_n$ is free $\Oo$-module for $n \ll 0$.
\end{enumerate}
Note $\Oo[X, Y] \in \G_{\Oo}$.
We first prove the following:
\begin{proposition}\label{d2prop}
Let $R \in \G_\Oo$. Then for $l \gg 0$ the ring $R^{<l>}$ is \CM. In particular $R$ is generalized \CM.
\end{proposition}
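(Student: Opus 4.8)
The plan is to reduce the statement to a single computation of the graded local cohomology of $R$ with respect to its $*$-maximal ideal $\n = (\pi) + R_+$, and then observe that passing to a sufficiently high Veronese discards the only graded pieces that could fail to vanish. The engine is the Grothendieck spectral sequence attached to the factorization $\Gamma_\n = \Gamma_{(\pi)}\circ\Gamma_{R_+}$ of torsion functors (the support of $\n$ is $V(\pi)\cap V(R_+)$). Since $\pi$ generates a principal ideal of $\Oo = R_0$, we have $H^p_{(\pi)}(-) = 0$ for $p\geq 2$, so the spectral sequence degenerates to short exact sequences
\[
0 \rt H^1_{(\pi)}\bigl(H^{i-1}_{R_+}(R)\bigr) \rt H^i_\n(R) \rt H^0_{(\pi)}\bigl(H^i_{R_+}(R)\bigr) \rt 0 ;
\]
this is exactly the long exact sequence used in the proof of Proposition \ref{acm}. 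Here, for an $\Oo$-module $E$, the term $H^0_{(\pi)}(E)$ is the $\pi$-power torsion submodule of $E$ and $H^1_{(\pi)}(E) = E_\pi/E$.

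The first step is to feed in conditions (5) and (6) defining $\G_\Oo$. Condition (5) gives $H^0_{R_+}(R) = H^1_{R_+}(R) = 0$, which kills both outer terms above for $i = 0, 1$ and the left-hand term for $i = 2$; thus $H^0_\n(R) = H^1_\n(R) = 0$ and $H^2_\n(R) \cong H^0_{(\pi)}(H^2_{R_+}(R))$. Because $\pi$ lives in degree $0$, the torsion functor $\Gamma_{(\pi)}$ is graded and commutes with taking homogeneous components, so $H^2_\n(R)_n$ is precisely the $\pi$-torsion submodule of the finitely generated $\Oo$-module $H^2_{R_+}(R)_n$.

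The second step is to pin down the finitely many degrees where $H^2_\n(R)$ can be nonzero. By the standard vanishing theorem for graded local cohomology, $H^2_{R_+}(R)_n = 0$ for $n \gg 0$; condition (6) says $H^2_{R_+}(R)_0 = 0$ and $H^2_{R_+}(R)_n$ is $\Oo$-free, hence $\pi$-torsion-free, for $n \ll 0$. Therefore the set $B = \{\, n \in \Z : H^2_\n(R)_n \neq 0 \,\}$ is finite and does not contain $0$. Choosing $l \geq 1$ with $l > |b|$ for every $b \in B$ then forces $H^2_\n(R)_{nl} = 0$ for all $n \in \Z$, and the analogous statement for $i = 0, 1$ is trivial. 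Since local cohomology commutes with the Veronese functor (as recorded in the proof of Lemma \ref{powers-cm}), $H^i_{\n^{<l>}}(R^{<l>}) = H^i_\n(R)^{<l>} = 0$ for $i = 0, 1, 2$; as $\dim R^{<l>} = 3$, this is exactly the assertion that $R^{<l>}$ is \CM, and the argument applies verbatim to every larger $l$. For the final sentence, the same computation exhibits each $H^i_\n(R)$ with $i < 3$ as a finite direct sum of finitely generated torsion $\Oo$-modules, hence as a finitely generated $\n$-torsion $R$-module of finite length, so $R$ is generalized \CM.

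I do not anticipate a serious obstacle: once the definition of $\G_\Oo$ is unwound through the $\Gamma_\n = \Gamma_{(\pi)}\circ\Gamma_{R_+}$ spectral sequence, the proposition follows by elementary bookkeeping with finitely generated modules over the DVR $\Oo$. The one point deserving a remark is the vanishing $H^2_{R_+}(R)_n = 0$ for $n \gg 0$, which must be justified without assuming $R$ standard graded: this follows by writing $R$ as a homomorphic image of a polynomial ring over $\Oo$ in variables of positive degree, invoking independence of base for local cohomology, and computing with the \v{C}ech complex over that polynomial ring (equivalently, via graded Matlis duality over $\Oo$, as in the proof of Proposition \ref{acm}).
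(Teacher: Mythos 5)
Your argument is correct and is essentially the paper's own proof: the short exact sequences $0 \rt H^1_{(\pi)}(H^{i-1}_{R_+}(R)) \rt H^i_\n(R) \rt H^0_{(\pi)}(H^i_{R_+}(R)) \rt 0$ are just the split form of the long exact sequence comparing $H^i_\n(R)$, $H^i_{R_+}(R)$ and $H^i_{(R_\pi)_+}(R_\pi) = (H^i_{R_+}(R))_\pi$ that the paper uses, and the bookkeeping with conditions (5), (6) of $\G_\Oo$ (vanishing in degree $0$, $\Oo$-freeness for $n \ll 0$, vanishing for $n \gg 0$, then passing to a high Veronese) matches the paper's. The only cosmetic difference is that you bypass the explicit use of $R_\pi$ being \CM, which the paper invokes to kill $H^i_{(R_\pi)_+}(R_\pi)$ for $i=0,1$ but which already follows from condition (5) by localization.
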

\begin{proof}
Let $\n = (\pi, R_+)$ be the $*$-maximal ideal of $R$. Consider the long exact in cohomology
$$ \cdots \rt H^i_\n(R) \rt H^i_{R_+}(R) \rt H^i_{{R_\pi}_+}(R_\pi) \rt H^{i+1}_\n(R) \rt \cdots $$
As $H^i_{R_+}(R) $ and $H^i_{{R_\pi}_+}(R_\pi)$ is zero for $i = 0, 1$ we get $H^0_\n(R) = H^1_\n(R) = 0$ and as $H^2_{R_+}(R)_0 = 0$ we get   $H^2_\n(R)_0 = 0$. As $H^2_{R_+}(R)_n$ is free $\Oo$-module for $n \ll 0$ the map
$H^2_{R_+}(R)_n \rt H^i_{{R_\pi}_+}(R_\pi)_n$ is injective for $n \ll 0$. It follows that $H^2_\n(R)_n = 0$ for $n \ll 0$. Also clearly $H^2_\n(R)_n = 0$ for $n \gg 0$. As local cohomology behaves well with respect to the Veronese functor it follows that   $R^{<l>}$ is \CM \ for $l \gg 0$.
\end{proof}
Next we show
\begin{theorem}\label{d2local}
Let $p > 0$ be the characteristic of $\Oo/(\pi)$. Let $R \in \G_\Oo$.
  Let $G \subseteq Aut^*(R)$ be a $p$-group. Then $R^G \in  \G_\Oo$.
\end{theorem}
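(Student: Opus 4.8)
The plan is to mimic, in dimension $3$, the strategy used for Theorem \ref{main-body}, but in the considerably simpler setting of $\G_\Oo$ (where $\dim R = 3$ and no condition on $H^3$ is needed). As in the other proofs, I would first reduce to the case that $G$ is cyclic of order $p$: if $H \trianglelefteq G$ is a normal cyclic subgroup of order $p$, then $G/H$ acts on $R^H$ through $Aut^*(R^H)$, and an easy check shows $R^H/\pi R^H$ is still a domain (it embeds in $(R/\pi R)^H$) and $(R^H)_\pi = (R_\pi)^H$ is still \CM; so if I know $R^H \in \G_\Oo$ then I can finish by induction on $|G|$. Having made this reduction, set $S = R^G$. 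Conditions (1)--(4) of \ref{dim3} for $S$ are immediate: $S_0 = \Oo$, $\dim S = \dim R = 3$, $S$ is a domain, $S/\pi S \hookrightarrow (R/\pi R)^G$ is a domain (using $0 \rt R \xrightarrow{\pi} R \rt R/\pi R \rt 0$ and taking invariants), and $S_\pi = (R_\pi)^G$ is \CM\ because $|G|$ is a unit in $\Oo_\pi$.

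The substantive part is establishing (5) and (6) for $S$, i.e.\ that $H^i_{S_+}(S) = 0$ for $i = 0,1$, that $H^2_{S_+}(S)_0 = 0$, and that $H^2_{S_+}(S)_n$ is a free $\Oo$-module for $n \ll 0$. For this I would invoke the Ellingsrud--Skjelbred spectral sequences with $\A = S_+$, exactly as in the proof of Theorem \ref{main-body}. The $(\beta)$-sequence $\mathcal{E}_2^{p,q} = H^p(G, H^q_{S_+}(R)) \Rightarrow H^{p+q}_{S_+}(G,R)$ has $\mathcal{E}_2^{p,q} = 0$ for $q = 0,1$ (since $H^q_{S_+}(R) = 0$ there, as $S_+R$ is $S_+$-primary and $R$ has depth $\geq 2$ along $R_+$), so $H^r_{S_+}(G,R) = 0$ for $r = 0,1$, and $H^2_{S_+}(G,R)_0 = 0$, $H^2_{S_+}(G,R)_n$ free for $n \ll 0$ — these last claims using condition (6) for $R$ together with the fact that a free $\Oo$-module injects into its localization, just as in \ref{acm}. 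Then the $(\alpha)$-sequence $E_2^{p,q} = H^p_{S_+}(H^q(G,R)) \Rightarrow H^{p+q}_{S_+}(G,R)$, combined with $\pi H^1(G,R) = 0$ (see \cite[9.5]{P}) and Lemma \ref{h0g2} (whose parts (1) and (2) give $H^0_{S_+}(H^1(G,R))_0 = 0$ and the analogous vanishing for $n \gg 0$, $n < 0$), gives via the edge map $0 \rt H^0_{S_+}(H^1(G,R)) \rt H^2_{S_+}(S) \rt E_\infty^{2,0} \hookrightarrow (\text{subquotient of } H^2_{S_+}(G,R))$ and the low-degree exact sequence $H^0_{S_+}(S) \rt H^0_{S_+}(R) \rt \cdots$ the desired vanishing $H^0_{S_+}(S) = H^1_{S_+}(S) = 0$ and $H^2_{S_+}(S)_0 = 0$.

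The main obstacle I anticipate is the freeness statement in (6) for $S$: showing $H^2_{S_+}(S)_n$ is a free $\Oo$-module for $n \ll 0$. The vanishing-in-negative-degrees trick is not available here since we genuinely expect $H^2_{S_+}(S)$ to be nonzero in the relevant range; instead one needs, as in Case 2 / Claim 3 of the proof of \ref{main-body}, to pass to a suitable Veronese $T = R^{<mr>}$, choose a generic linear form $v \in T^G_1$ that is regular on the relevant cohomology modules, use the sequence $0 \rt T^G(-1) \xrightarrow{v} T^G \rt U^G \rt 0$ obtained by taking $G$-invariants of $0 \rt T(-1) \xrightarrow{v} T \rt U \rt 0$ (valid because $v$ can be chosen $H^1(G,T)$-regular), and chase the induced long exact sequence in $H^\bullet_{T^G_+}$ together with an auxiliary Ellingsrud--Skjelbred argument on $U$ to propagate $\Oo$-freeness downward degree by degree. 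One has to split into the two cases according to whether $G \rt Aut(R/\pi R)$ is trivial (where Lemma \ref{g2-trivial} applies directly and the argument collapses to the short computation in the proof of Theorem \ref{g2}) or injective (where the generic-hyperplane descent is needed); in either case the output is $H^2_{T^G_+}(T^G)_n$ free for $n \leq 0$, hence by the Veronese compatibility of local cohomology $H^2_{S_+}(S)_{mrj}$ free for $j \ll 0$, and then \ref{powers-cm} upgrades this to the full statement, giving $S \in \G_\Oo$.
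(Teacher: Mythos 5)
Your first two paragraphs match the paper: the reduction to $G$ cyclic of order $p$, the verification of conditions (1)--(4) of \ref{dim3}, and the use of the two Ellingsrud--Skjelbred sequences with $\A = S_+$ to obtain $H^i_{S_+}(S) = 0$ for $i = 0,1$ and $H^2_{S_+}(S)_0 = 0$ are exactly what the paper does. The gap is your third paragraph: you declare the freeness of $H^2_{S_+}(S)_n$ for $n \ll 0$ to be the main obstacle, requiring the Case 2 / Claim 3 machinery of \ref{main-body} (a Veronese passage, a generic linear form $v$, an auxiliary argument on $U$, and a trivial/injective case split). That difficulty is illusory, and the detour you sketch is not a proof. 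In the edge sequence you already wrote, $0 \rt H^0_{S_+}(H^1(G,R)) \rt H^2_{S_+}(S) \rt E_3^{2,0} \rt 0$, the only term that could spoil freeness is $H^0_{S_+}(H^1(G,R))$, and this is a submodule of $H^1(G,R)$, which vanishes in every degree $n < 0$ simply because $R$ is concentrated in degrees $\geq 0$. Hence for all $n < 0$ one has $H^2_{S_+}(S)_n \cong (E_3^{2,0})_n = (E_\infty^{2,0})_n$, and by \ref{ltc} this is a \emph{submodule} of $H^{2}_{S_+}(G,R)_n$ (not merely a subquotient, as you write --- a subquotient of a free $\Oo$-module need not be free), while the $(\beta)$-sequence gives $H^{2}_{S_+}(G,R) = H^0(G, H^2_{S_+}(R)) \subseteq H^2_{S_+}(R)$, whose degree-$n$ piece is a free $\Oo$-module for $n \ll 0$ by condition (6) for $R$; a submodule of a free module over the DVR $\Oo$ is free. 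This two-line observation is how the paper concludes $S \in \G_\Oo$: no Veronese, no hyperplane section, and no case analysis on $G \rt Aut(R/\pi R)$ is needed.

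Moreover, your fallback route is not established as stated: in the ``trivial action'' case you appeal to the computation in the proof of Theorem \ref{g2}, but that argument uses that $R$ is \CM, which a ring in $\G_\Oo$ need not be; and the Claim 3 descent in \ref{main-body} is designed to control $H^1_\n(H^1(G,T))$, the term feeding into $H^3$ in the four-dimensional class $\F_\Oo$, whereas here the only contribution to $H^2$ is $H^0_{S_+}(H^1(G,R))$, which is killed in negative degrees for the trivial reason above (and in degree $0$ because $H^1(G,R)_0 = H^1(G,\Oo) = 0$). So replace your third paragraph by reading off the freeness directly from the exact sequence you set up; with that change the argument coincides with the paper's proof.
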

\begin{proof}
By a standard induction
we may assume that $G$ is cyclic of order $p$.
Clearly $S = R^G$ is a Noetherian graded domain of dimension three. Also as $S/\pi S $ is a subring of $(R/\pi R)^G$ which is a domain it follows that $S/\pi S$ is a domain.
As $R_\pi$ is \CM \ of characteristic zero it follows that $S_\pi = (R_\pi)^G$ is also \CM.

To prove the rest of properties  we use the Ellingsrud-Skjelbred spectral sequence with $\A = S_+$.
We first consider the spectral sequence
\[
(\beta_5)\colon \quad \quad \mathcal{E}_2^{p,q} = H^p(G, H^q_{S_+}(R)) \Longrightarrow H^{p+q}_{S_+}(G,R).
\]
We note that $  \mathcal{E}_2^{p,q} = 0$ for $q =  0, 1$ and when $q \geq 3$. So the spectral sequence collapses.
We have $H^{r}_{S_+}(G,R) = 0$ for $r = 0, 1$ and $H^{2}_{S_+}(G,R) = H^0(G, H^2_{S_+}(R))$. So we have $H^{2}_{S_+}(G,R)_0 = 0$ and $H^{2}_{S_+}(G,R)_n$ free $\Oo$-module for $n \ll 0$.

 Next we consider the spectral sequence
 \[
(\alpha_5)\colon \quad \quad  E_2^{p,q} = H^p_{S_+}(H^q(G, R)) \Longrightarrow
H^{p+q}_{S_+}(G,R).
\]
We note that for $p = 0, 1$ we have $E_2^{p, 0} = E_\infty^{p,0}$. The latter module is zero as $H^{r}_{S_+}(G,R) = 0$ for $r = 0, 1$. Thus $H^i_{S_+}(S) = 0$ for $i = 0, 1$.
We have an exact sequence
\[
H^0_{S_+}(H^1(G, R)) \rt H^2_{S_+}(S) \rt E_3^{2,0} \rt 0.
\]
We note that $E_3^{2, 0} = E_\infty^{2,0}$ which is a submodule of $H^{2}_{S_+}(G,R)$, see \ref{ltc}. It follows that $H^2_{S_+}(S)_n$ is free $\Oo$-module for $n \ll 0$. We note that as $R_0 = \Oo$ we get $H^1(G, R)_0 = 0$. So $H^0_{S_+}(H^1(G, R))_0 = 0$. Thus $H^2_{S_+}(S)_0 = 0$. So $S \in \G_\Oo$.
\end{proof}

We now give
\begin{proof}[Proof of Theorem \ref{d2}]
By \ref{cm-sylow} it suffices to assume $G$ is a $p$-group. Let $P$ be a non-zero prime  in $A$ and set $\Oo = A_P$. Let $q$ be the characteristic of the residue field of $\Oo$. If $q \neq p$ then $\Oo[X,Y]^G$ is \CM.  The essential case is when $q = p$.
We note that there are only finitely many primes in $A$ lying above $p \Z$.
 By Theorem
\ref{d2local} it follows that $(\Oo[X,Y]^G)^{<l>}$ is \CM \ for all $l \gg 0$. The result follows.
\end{proof}
\section{Proof of Theorem \ref{gor} and Corollary \ref{sln}}
We first give
\begin{proof}[Proof of Theorem \ref{gor}]
(i) $\implies$ (ii). This holds as $U$ is a localization of $S$.

(ii) $\implies$ (i). We have  to prove that $S_\M$ is Gorenstein for all maximal homogeneous ideal of $S$. For this it suffices to prove that if $P$ is a maximal ideal of $A$ then $S_P$ is Gorenstein. Set $(\Oo, (\pi))  = (A_P, PA_P)$. Set $V = S_P$. Note that $U = V_\pi$. Let $\n = (\pi, V_+)$ be the $*$-maximal ideal of $V$. As $V$ is \CM \ we have an exact sequence
\[
0 \rt H^n_{V_+}(V) \rt H^n_{(V_\pi)_+}(V_\pi) \rt H^{n+1}_\n(V) \rt 0.
\]
For all $i \in \Z$ set $F_i = H^n_{V_+}(V)_i$. By \cite[16.1.5]{BS},  $F_i$ is a finitely generated $\Oo$-module. As it is a submodule of $(F_i)_\pi$ it follows that $F_i$ is a free $\Oo$-module.
Set $F_i = \Oo^{a_i}$. Let $E$ be the injective hull of $\Oo/(\pi)$. We have an exact sequence $ 0 \rt \Oo \rt \Oo_\pi \rt E \rt 0$. It follows that
$$H^{n+1}_\n(V)_i = E^{a_i}$$.
As $V_\pi$ is Gorenstein the dual of $ H^n_{(V_\pi)_+}(V_\pi)$ is isomorphic to $V_\pi(a)$ for some $a \in \Z$. We have
\begin{equation*}
  \sum_{i\in \Z} \dim_{ V_\pi }  H^n_{(V_\pi)_+}(V_\pi)_{-i}z^i  = \sum_{ i \in \Z} \dim _{ V_\pi }(V_\pi)_{i + a}z^i  = \sum_{ i \in \Z} \rank _{ \Oo }V_{i + a}z^i. \tag{$\dagger$}
\end{equation*}

We have an exact sequence $0 \rt S \xrightarrow{\pi} S \rt \ov{S} \rt 0$. This yields an exact sequence
$$0 \rt H^n_\n(\ov{S})_i \rt H^{n+1}_\n(V)_i \xrightarrow{\pi}   H^{n+1}_\n(V)_i \rt 0. $$.
It follows that $H^n_\n(\ov{S})_i  = k^{a_i}$ where $k = \Oo/\pi$. The dual of $H^n_\n(\ov{S})$ is isomorphic to the canonical module,  $\omega_{\ov{S}}$, of $\ov{S}$. By ($\dagger$) it follows that its Hilbert series is equal to the Hilbert series of  $\ov{S}(a)$. As $\ov{S}$ is a domain it follows that $\omega_{\ov{S}} = \ov{S}(a)$, see \cite[4.4.5]{BH}. So $\ov{S}$ is Gorenstein. It follows that $S$ is Gorenstein.
\end{proof}
Next we give
\begin{proof}[Proof of Corollary \ref{sln}]
As  $G \subseteq SL_n(A) \subseteq SL_n(K)$,  it follows that \\ $U = K[X_1, \ldots, X_n]^G$ is Gorenstein, see  \cite[6.4.9]{BH}. By \ref{gor} it follows that $S$ is Gorenstein.
\end{proof}
\section{Some Representation theoretic considerations}
We first give
\begin{proof}[Proof of Proposition \ref{2-rep}]
(1) We note that $T = \wh{\Oo}[X]/(X^2 - 1)$ is local with maximal ideal $\m = (2, X -1) = (X-1, X +  1)$. We note that $(X-1)(X + 1) = 0$ in $T$. It follows from a result of Takahashi, \cite[3.2.5]{T},
that $T$ is of finite representation type with three indecomposables, say $M, N , T$ with  $\m = M \oplus N$. We note that $\rank_\Oo M = \rank_\Oo N = 1$. It follows that $S(M)^G$ and $S(N)^G$ are \CM. We have that the induced action of $G$ on $S(T)/\pi S(T)$ is injective. So by \cite[1.4]{P} we get that $S(T)^G$ is \CM.

(2) Set $T = \Oo[X]/(X-1)^2 = \Oo[Z]/(Z^2)$. By \cite{BGS} we have indecomposable modules $\Oo, T, M_n$ for $n \geq 1$. Clearly $S(\Oo)^G = S(\Oo)$ is \CM. We also have $\rank_\Oo T = \rank_\Oo M_n = 2$ for $n \geq 1$, see \cite[4.1]{BGS}. It follows from \cite[1.4]{P} that $S(T)^G, S(M_n)^G$ are \CM.
\end{proof}

\begin{proof}[Proof of Proposition \ref{3-rep}]
We have $(X^3 - 1)  = (X-1)(X- \zeta)(X - \zeta^2) = f_1f_2f_3$. Set $T = \Oo[X]/(X^3 - 1)$. By \cite[4.3]{DH}, rigid $T$-modules are $\Oo[X]/(f_i)$, $\Oo[X]/(f_if_j)$ (with $i \neq j$) and $T$.
We note that $ M_i = \Oo[X]/(f_i) $ has rank one as an $\Oo$-module. So $S(M_i)^G$ is \CM. We also have $N_{ij} = \Oo[X]/(f_if_j)$ with $i \neq j$ has rank two as an $\Oo$-module. So by \cite[1.4]{P} we get that $S(N_{ij})$ is \CM. We note that the action of $G$ on $T$ is the cyclic permutation action on $\{ 1, X, X^2 \}$. By a result in
 \cite[Theorem 23]{A} it follows that $S(T)^G$ is \CM.
\end{proof}
\section{proof of Theorem \ref{class-inj}}
In this section we prove Theorem \ref{class-inj}. We need to recall how the map $i \colon C(S) \rt C(R)$ is constructed.

\s Let $T$ be an integrally closed domain. By $X(T)$ we denote the free group on height one primes of $T$. Let $P(T)$ be the group of principal divisors. Then $C(T) = X(T)/P(T)$ is the class group of $T$.

\s\label{defn-i} Let $T $ be a subring of $U$ such that $U$ is a finitely generated $T$-module. Also assume $U, T$ are integrally closed domains.
For a height one prime $Q$ of $U$ define
\[
e(Q) = \ell(U_Q/(Q\cap T)U_Q).
\]
For a height one prime $\q$ in $T$ define
\[
i(\q) = \sum_{Q\cap T = \q}e_Q \ Q
\]
and then extend by linearity $i \colon X(T) \rt X(U)$.
It can be shown that for $a \in T$ we have $i(div_T(a)) = div_U(a)$. So we have a map
$i \colon C(T) \rt C(U)$.

We now give:
\begin{proof}[Proof of Theorem \ref{class-inj}]
We note that if $\sigma \in G$ and if $Q$ is a prime ideal in $R$ then $\sigma(Q)$ is a prime ideal in $R$. So $\sigma$ acts on $X(R)$. This action restricts to the natural action of $\sigma$ on $P(R)$. So $G$ acts on $C(R)$.
Let $\q$ be a prime ideal of height one in $S$.
Then
\[
i(\q) = \sum_{Q\cap S = \q}e_Q \ Q
\]
We note that if $Q, Q^\prime$ lie above $\q$ then for some $\sigma \in G$ we have $\sigma(QR_\q) = Q^\prime R_\q$, see \cite[Chapter VII, 2.1]{L}.
So we have
\[
e_Q = \ell(R_Q/\q R_Q) = \ell(R_{Q^\prime}/\q R_{Q^\prime} ) = e_{Q^\prime}.
\]
Call this common value as $\theta(\q)$.
So we have
\[
i(\q) = \theta(\q)\left(\sum_{Q\cap S = \q}Q\right).
\]

Suppose $D$ is a divisor such that $i([D]) = 0$. We may assume that $D$ is effective.
Say $D = \sum_{i =1}^{m}a_i\q_i$ with $a_i \geq 0$. So we have
$$i(D) =  \sum_{i =1}^{m}a_i \theta(\q_i)\left(\sum_{Q\cap S = \q_i}Q\right). $$
Note $i(D)$ is also effective. Thus $i(D) = div_R(x)$ for some $x \in R$. As $div_R(x)$ is $G$-invariant it follows that $(x) = (\sigma(x))$ for every $\sigma \in G$. So $\sigma(x) = r_\sigma x$ where $r_\sigma \in R$. Comparing degrees we get that $r_\sigma$ has degree zero. In particular $r_\sigma \in A$. Let $t$ be the order of $\sigma$. We have
$x = \sigma^t(x) = r_\sigma^t x$. So $r_\sigma^t = 1$. Thus $r_\sigma$ is root of unity in $A$.

Let $\tau \in G$. Then $(\tau \sigma)(x) = r_\tau r_\sigma x$. So we have a group homomorphism $h \colon G \rt  K^*$ where $h(\sigma) = r_\sigma$. By assumption $h$ is trivial. So $r_\sigma = 1$ for all $\sigma$. Thus it follows that $x \in S$.

Let $div_S(x) = \sum_{j = 1}^{l}b_j\p_j$. Then
$$div_R(x)  = \sum_{j =1}^{l}b_j \theta(\p_j)\left(\sum_{Q\cap S = \p_j}Q\right). $$
As $i(D) = div_R(x)$, comparing coefficients in $X(R)$ we get $D = div_S(x)$. So $D$ is a principal divisor. The result follows.

(2) The map $f \colon C(A) \rt C(R)$ defined by $f(\q) = \q R$ is an isomorphism.
Let $\p$ be a non-principal ideal in $A$. The ideal $\p R$ is a prime ideal in $R$ of height one and so $\p R \cap S$ is a prime ideal in $S$ of height one.

If $C(S) = \{ 0 \}$ then the prime ideal $\p R \cap S $ is principal. Say $\p R \cap S = (x)$. Then necessarily $x \in A$ and we also have $\p = x A$ is principal. This is a contradiction.
\end{proof}

\s \label{ex-class} We now give examples of groups which satisfy the hypotheses of our theorem.
\begin{enumerate}
\item
$G = [G, G]$.
\item
If $\charp K = p > 0$ then any $p$-group $G$ satisfies the hypotheses of our theorem.
\item
If $A \subseteq \mathbb{R}$ then any group with odd number of elements will satisfy the hypotheses of our theorem.
\end{enumerate}

\section{A remark regarding spectral sequences}
In this section we discuss a fact regarding spectral sequences which is crucial for us.

\s\label{ltc} Let $E^{p,q}_a \Rightarrow H^{p+q}$ be a first quadrant cohomological spectral sequence. Then for all $n \geq 0$ we have
$$ E_\infty^{n, 0} \subseteq H^n. $$
See \cite[5.2.6]{W} for this fact.

\section{Conflict of interest statement}
The author have no conflict of interest to declare that are relevant to this article.

\section{data availability statement}

\emph{My manuscript has no associated data.}

\end{document}